\newcommand{\Iso}{\text{Isom}}
\newcommand{\dist}{\text{dist}}
\newtheorem{theorem}{Theorem}[section]
\newtheorem{lemma}[theorem]{Lemma}
\newtheorem{proposition}[theorem]{Proposition}
\newtheorem{corollary}[theorem]{Corollary}
\theoremstyle{definition}
\newtheorem{definition}[theorem]{Definition}
\newtheorem{example}[theorem]{Example}
\theoremstyle{remark}
\newtheorem{remark}[theorem]{Remark}
\numberwithin{equation}{section}
\begin{document}

\title[On geodesic extendibility and the space of compact balls]{On geodesic extendibility and the space of compact balls of length spaces}

\author{Waldemar Barrera}
\address{Facultad de Matem\'aticas, Universidad Aut\'onoma de Yucat\'an, Perif\'erico Norte Tablaje 13615,  C.P. 97110, M\'erida, M\'exico.}
\curraddr{}
\email{bvargas@correo.uady.mx}
\thanks{ORCID: 0000-0001-6885-5556.}

\author{Luis M. Montes de Oca}
\address{Facultad de Matem\'aticas, Universidad Aut\'onoma de Yucat\'an, Perif\'erico Norte Tablaje 13615,  C.P. 97110, M\'erida, M\'exico.}
\curraddr{}
\email{mauricio.montesdeoca@alumnos.uady.mx}
\thanks{}

\author{Didier A. Solis}
\address{Facultad de Matem\'aticas, Universidad Aut\'onoma de Yucat\'an, Perif\'erico Norte Tablaje 13615,  C.P. 97110, M\'erida, M\'exico.}
\curraddr{}
\email{didier.solis@correo.uady.mx}
\thanks{ORCID: 0000-0001-5099-3088}

\subjclass[2020]{Primary 53C23, 53C22, Secondary 53C21}

\begin{abstract}
In this work we study the issue of geodesic extendibility on complete and locally compact metric length spaces. We focus on the geometric structure of the space $(\Sigma (X),d_H)$ of compact balls endowed with the Hausdorff distance and give an explicit isometry  between $(\Sigma (X),d_H)$  and the closed half-space $ X\times \mathbb{R}_{\ge 0}$ endowed with a taxicab metric. Among the applications we establish a group isometry between $\Iso (X,d)$ and $\Iso (\Sigma (X),d_H)$ when $(X,d)$ is a Hadamard space.
\end{abstract}
\maketitle

\section{Introduction}

 In the context of metric length spaces, the study of distance realizing geodesics is very subtle since the standard techniques of global Riemannian geometry do not carry over to the non-smooth scenario in a straightforward way. Les us recall, as an illustrative example, that in CAT(0) spaces geodesics may branch out and yet keep being distance realizing past a branching point.  However, complete and simply connected length spaces with non-positive upper curvature bounds  ---also termed Hadamard spaces---  share many  features with their Riemannian counterparts. For instance,  in such spaces any pair of points can be joined by a unique distance realizing geodesic segment and are homeomorphic to an Euclidean space \cite{Burago,chavel,AB01}, and thus, they may be considered the closest analogs to the classical Euclidean and hyperbolic geometries. 
There has been a significant amount of research on geodesically complete metric length spaces  with upper curvature bounds, specially from the perspective of geometric measure theory \cite{Lytchak,Bertrand} and geometric group theory \cite{caprace,caprace2}.

In this work we deal with a class of length spaces that generalizes Hadamard spaces, namely, locally compact and complete length spaces $(X,d)$ whose maximal geodesics are distance realizing throughout their whole domain of definition. We emphasize that no assumptions are made regarding curvature bounds on  $(X,d)$. 

We focus our attention on the geodesic structure of the space $(\Sigma (X),d_H)$  of compact balls of $X$ endowed with the Hausdorff distance $d_H$. Part of the  motivation behind this endeavor comes from the theory of continua, where this sort of spaces have been studied extensively from the topological point of view in the context of hyperspace theory \cite{Jonard,sergey}, but their geometric structure remains unexplored for the most part.

The present work is organized as follows: in section \ref{sec:prelim} we establish the notation and principal concepts to be used in this work.  Here we introduce the relevant geometric condition relating geodesics and metric balls ---namely, strong geodesic extendibility. Roughly speaking, this property  states that any distance realizing curve going through the center of a metric ball keeps being distance realizing until it exits the ball; or equivalently, that a geodesic admits a geodesic extension of arbitrary length. In section \ref{sec:main}  use this notion to give a description of the geodesic structure of the space $(\Sigma (X),d_H)$ in terms of a taxicab metric (Theorem \ref{main}).  Finally, in section \ref{sec:apps} we use our main result  to prove several different results related to geodesic extendibility. For instance, we show the stability of this property under uniform convergence (Corollary \ref{corollary:uniform}), study its  behavior under quotients of isometric actions (Theorem \ref{theorem:quotient}) and provide a explicit isometry between $\Iso (\Sigma (X),d_H)$ and $\Iso (X,d)$ when the latter is a Hadamard space (Corollary \ref{coro:isom}). 

\section{Preliminaries}\label{sec:prelim}

Let us start by defining some basic concepts and establishing the notation to be used throughout this work. We will be using the notation and definitions found in the standard references \cite{Bridson,Burago,Plaut}.

\subsection{Length spaces and Hausdorff distance}

We will be dealing meanly with geodesic length spaces. Let us recall that a length space is a metric space $(X,d)$ that satisfies $d=d_L$ where $d_L$ is the metric associated to the induced length structure
given by $L(\gamma )=\sup_P\{S(P)\}$, where $P$ denotes a partition $a=t_0<t_1<\cdots <t_{k-1}<t_k=b$ of the closed interval $[a,b]$ and
$S(P)=\sum_{i=1}^kd(\gamma (t_{i-1}),\gamma (t_i))$ denotes de length of the  corresponding polygonal curve approximating $\gamma$. Thus,  a metric space $(X,d)$ is a \textit{length space} if for all $p,q\in X$
\begin{equation*}
d(p,q)=\inf \{ L(\gamma) \mid \gamma \text{ is a curve joining }p\text{ and }q \} .
\end{equation*}
If a curve $\gamma$ joining $p$ and $q$ is distance realizing, --that is, if $L(\gamma )=d(p,q)$-- we call it a \textit{geodesic segment}. If in a length space $(X,d)$ every pair of points can be joined by a geodesic segment we call such a space \textit{geodesic} or \textit{strictly intrinsic}. We will further consider all these segments parameterized with respect to arc length. Let us recall that any complete and locally compact length space is geodesic with compact (metric) closed balls (see Prop. 2.5.22 and Thrm. 2.5.23 in \cite{Burago}).

The \textit{Hausdorff distance} $d_H$ associated to the metric space $(X,d)$ is defined by
    \begin{equation*}
    d_H(A,B) = \inf\{ r: A\subset U_r(B), B\subset U_r(A) \}
     \end{equation*}
where $U_r(A)=\{x\in X: \dist(x,A)<r\} = \displaystyle\cup_{a\in A} B_r(a)$ denotes the tubular neighborhood of $A$ of radius $r$ and $\dist(x,A)=\inf\{ d(x,a) : a\in A\}$. Equivalently, 
\begin{enumerate}
\item $d_H(A,B) = \inf\{r: A\subset \overline{U}_{r}(B), B\subset \overline{U}_{r}(A) \}$, where $\overline{U}_{r}(A) =\{x\in X: \dist(x,A)\leq r\}$.
\item $d_H(A,B)=\displaystyle\max\left\{ \displaystyle\sup_{a\in A} \dist(a,B), \displaystyle\sup_{b\in B} \dist(b,A) \right\}.$
\end{enumerate}
This latter formulation is more convenient for certain computations. For instance, we can readily see that the Hausdorff distance between two compact intervals $I=[a,b]$ and $J=[c,d]$ in $\mathbb{R}$ is given by
\begin{equation*}
d_H([a,b],[c,d]) = \max\{|c-a|,|d-b|\}.
\end{equation*}

\begin{proposition}\label{tubular}
Let $t,s>0$ and $A$ be a compact subset of a complete and locally compact length space $(X,d)$. Then $\overline{U}_{t}(A)$ is compact. Moreover,  $\overline{U}_t(\overline{U}_s(A))=\overline{U}_{t+s}(A)$.
\end{proposition}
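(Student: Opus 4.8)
The plan is to handle the two assertions separately, treating compactness first and the set identity second, with both resting on the fact recalled above from \cite{Burago} that in a complete and locally compact length space the closed metric balls are compact and any two points are joined by a geodesic segment.

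For compactness, I would first observe that $x\mapsto\dist(x,A)$ is $1$-Lipschitz, hence continuous, so that $\overline{U}_t(A)=\{x:\dist(x,A)\le t\}$ is closed as the preimage of $[0,t]$. Next I would confine $\overline{U}_t(A)$ inside a single closed ball: fixing $a_0\in A$, any $x$ with $\dist(x,A)\le t$ admits, by compactness of $A$ and continuity of $d(x,\cdot)$, a nearest point $a\in A$ with $d(x,a)\le t$, whence $d(x,a_0)\le t+\operatorname{diam}(A)$. Since $A$ is compact its diameter is finite, so $\overline{U}_t(A)\subseteq\overline{B}_{t+\operatorname{diam}(A)}(a_0)$, a compact ball; a closed subset of a compact set is compact, which gives the claim.

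For the identity, the inclusion $\overline{U}_t(\overline{U}_s(A))\subseteq\overline{U}_{t+s}(A)$ is a routine triangle-inequality argument: given $x$ in the left-hand side, compactness of $\overline{U}_s(A)$ (just established) yields a nearest point $y\in\overline{U}_s(A)$ with $d(x,y)\le t$, together with a nearest point $a\in A$ to $y$ with $d(y,a)\le s$, so that $\dist(x,A)\le d(x,y)+d(y,a)\le t+s$.

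The reverse inclusion is where the geodesic structure becomes essential, and I expect it to be the main obstacle. Given $x\in\overline{U}_{t+s}(A)$, I would choose a nearest point $a\in A$ with $\ell:=d(x,a)=\dist(x,A)\le t+s$ and a geodesic segment $\gamma\colon[0,\ell]\to X$ from $a$ to $x$ parameterized by arc length. Setting $y=\gamma(\max(\ell-t,0))$, the distance-realizing property gives $d(x,y)=\min(t,\ell)\le t$ and $d(a,y)=\max(\ell-t,0)\le s$, so $y\in\overline{U}_s(A)$ and $\dist(x,\overline{U}_s(A))\le t$, i.e.\ $x\in\overline{U}_t(\overline{U}_s(A))$. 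The point to emphasize is that this step genuinely uses that $(X,d)$ is geodesic: in an arbitrary metric space one cannot interpolate along a segment from $a$ to $x$, and the additivity identity may fail.
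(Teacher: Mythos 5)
Your proof is correct and complete; the paper states this proposition without giving a proof, so there is no argument to compare against, but yours is the standard one and fills the gap properly. Both halves are sound: closedness of $\overline{U}_t(A)$ via the $1$-Lipschitz function $x\mapsto\dist(x,A)$ together with containment in the compact ball $\overline{B}_{t+\operatorname{diam}(A)}(a_0)$ gives compactness, and the nontrivial inclusion $\overline{U}_{t+s}(A)\subseteq\overline{U}_t(\overline{U}_s(A))$ is obtained exactly as it should be, by picking a nearest point $a\in A$ (which exists by compactness of $A$) and interpolating along a geodesic segment from $a$ to $x$, whose existence is guaranteed by the Hopf--Rinow-type result recalled in the paper. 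You are also right to flag that this last step is where the length-space hypothesis is genuinely used, since the additivity of tubular neighborhoods fails in general metric spaces.
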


Thus, in every complete and locally compact length space we can compute the Hausdorff distance between two closed balls by means of the following formula:
\begin{equation*}
d_{H}(\overline{B}_{t}(x), \overline{B}_{s}(y)) = \inf\{r: \overline{B}_{t}(x) \subset \overline{B}_{s+r}(y), \overline{B}_{s}(y) \subset \overline{B}_{t+r}(x)\}.
\end{equation*}

Finally, given a complete and locally compact metric space $(X,d)$, the space of  the compact balls of $X$ will be denoted by $(\Sigma (X),d_H)$.

\subsection{Strong geodesic extendibility}

Here we establish the notion of strong geodesic extendibility  and provide some examples relating to it. As stated in the previous section we are aiming at a property of geodesics that guarantee that geodesic segments can always be extended as distance realizing curves. Hence, the techniques developed here could be applied to a wide range of length spaces that include Hadamard spaces. 

A \textit{geodesic} in a length space $(X,d)$ is a curve that locally minimizes length. In more precie terms, a geodesic $\gamma : I\subset \mathbb{R}\to X$ satisfies that around any $p=\gamma (t)$ there exists an interval $J$ such that for any $[a,b]\subset J$ the curve $\gamma |_{[a,b]}$ is a geodesic segment (here $I$ and $J$ stand for open intervals). If we assume $(X,d)$ is complete and that any of its geodesics can be extended, then we have as an easy consequence of the metric version of Hopf-Rinow theorem that any maximal geodesic (when parameterized by arc-length) is defined on the whole $\mathbb{R}$. Hence, such spaces are called \textit{geodesically complete}. However, notice that in general geodesic segments may fail to realize distance when extended. Therefore the issue of extending geodesic segments as distance realizing curves is very subtle. 

\begin{example}\label{ex:nobdry}
Let us consider the closed unit disk $\overline{D}\subset \mathbb{R}^2$ with the standard Euclidean distance. Any radius is a geodesic segment that cannot be extended as a geodesic past the endpoint lying on the boundary. More generally, the same is true for geodesic segments joining the interior and the boundary in a Riemannian manifold with non-empty boundary.
\end{example}

\begin{example}\label{Diamond}
 Let us consider the set
\begin{equation*}
X=\{(u,0)\in \mathbb{R}^{2}: |u|\geq 1\} \cup \{(u,v)\in \mathbb{R}^{2}: |u|+|v|=1\},
\end{equation*}
and the intrinsic metric $d$ induced by the standard Euclidian metric in $\mathbb{R}^{2}$. Notice $(X,d)$ is geodesically complete. However, none of the two geodesic segments joining $x=(-1/2,1/2)$ and $y=(1/2,-1/2)$ can be extended as a distance realizing curve.
\end{example}

Recall that in the context of Riemannian geometry, the cut locus of a point $p$ is the set of points in which geodesics emanating from $p$ cease to be distance realizing. A classical result establishes that the cut locus is made up of first conjugate points or else points in which two different geodesics intersect \cite{chavel}. Thus, if there are no points conjugated to $p$ and the exponential map $exp_p$ is injective (as it is the case for Hadamard spaces) then any geodesic emanating from $p$ can be extended as a distance realizing curve. For length spaces, the issue of extendibility is more intricate as there isn't any clear characterization of the points along a geodesic in which it no longer realizes distance.  In \cite{sormani}, several different (though not equivalent) notions of conjugate points and cut loci are studied, leading to interesting results in specific situations.  Following this reference, we say $q$ is in the \textit{minimal cut locus of} $p$,  denoted $MinCut(p)$, if there is a  geodesic segment from $p$ to $q$ that extends to a geodesic which is not minimizing past $q$.  We will be concerned with spaces that can be described essentially as having empty minimal cut loci. This approach has proven fruitful to study convex surfaces \cite{Zem}.

\begin{definition}
Let $(X,d)$ be a complete and locally compact length space. If $MinCut(p)=\emptyset$ and every geodesic segment that starts at $p$ can be extended as a geodesic we say that \textit{strong geodesic extendibility} holds at $p$. If this latter condition holds in any $p\in X$ we say that $(X,d)$  is  \textit{strongly geodesically complete}.  
\end{definition}

\begin{remark}
In virtue of Cartan-Hadamard theorem \cite{Burago,AB01}, any (metric) Hadamard space is strongly geodesically complete. However, there are strongly geodesically complete spaces in which geodesic segments joining two points are not unique. The space $(\mathbb{R}^2,d_T)$, where $d_T(a,b)=|b_1-a_1|+|b_2-a_2|$ is the standard taxicab metric, provides such an example. Notice as well that  $(\mathbb{R}^2,d_T)$ is not a space of bounded curvature. 
\end{remark}

The following equivalent notions of strong geodesic extendibility will be used indistinctively throughout this work.

\begin{proposition}\label{Equivalences}
Let $(X,d)$ be a complete and locally compact length space. Then the following are equivalent:
\begin{enumerate}
\item Strong geodesic extendibility holds at $x$: for every geodesic segment $\gamma:[0,a]\to X$ starting at $x$ and $r>0$ there exists a geodesic segment $\tilde{\gamma}:[0,a+r]\to X$ such that $\tilde{\gamma}|_{[0,a]}=\gamma$.
\item For every geodesic segment $\gamma:[0,a]\to X$ starting at $p$ there exists a geodesic ray\footnote{Recall that a \textit{geodesic ray} is curve $\gamma :[0,\infty )\to X$ whose restriction to any closed interval is a geodesic segment, and thus realizes distance between any pair of its points.} $\tilde{\gamma}:[0,\infty)\to X$ such that $\tilde{\gamma}|_{[0,a]}=\gamma$.
\item For any $r>0$ and any $y\neq x$ there exists a point $p\in\partial\overline{B}_{r}(x)$ and a geodesic segment $\gamma:[0,d(x,y)+r]\to X$ such that $\gamma(0)=y$, $\gamma(d(x,y))=x$ and $\gamma(d(x,y)+r)=p$.
\end{enumerate}
\end{proposition}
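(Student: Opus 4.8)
The plan is to prove the three statements equivalent by first establishing the routine loop $(2)\Rightarrow(1)\Rightarrow(2)$, and then linking these to $(3)$ through a reduction to the existence of ``antipodal'' points on spheres centered at $x$. The implication $(2)\Rightarrow(1)$ is immediate: given $\gamma:[0,a]\to X$ and $r>0$, extend $\gamma$ to a ray $\tilde\gamma:[0,\infty)\to X$ and restrict to $[0,a+r]$; a ray restricts to a geodesic segment on every compact subinterval, so this is the required extension. For $(1)\Rightarrow(2)$ I would iterate: apply $(1)$ to $\gamma$ with $r=1$ to obtain $\gamma_1:[0,a+1]\to X$, then apply $(1)$ to $\gamma_1$ to obtain $\gamma_2:[0,a+2]\to X$, and so on, each $\gamma_{n+1}$ restricting to $\gamma_n$. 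The nested family glues to a well-defined curve $\tilde\gamma:[0,\infty)\to X$, and since any compact subinterval $[s,t]$ lies in some $[0,a+n]$ on which $\tilde\gamma$ coincides with the geodesic segment $\gamma_n$, the curve $\tilde\gamma$ is a geodesic ray extending $\gamma$.

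The key observation for the equivalence with $(3)$ is the reformulation: condition $(3)$ holds if and only if for every $y\neq x$ and $r>0$ there exists $p\in\partial\overline{B}_{r}(x)$ with $d(y,p)=d(x,y)+r$. Indeed, once such a $p$ is found, concatenating any geodesic segment from $y$ to $x$ with any geodesic segment from $x$ to $p$ produces a curve of length $d(x,y)+r=d(y,p)$, which is therefore distance realizing and passes through $x$ at arc length $d(x,y)$; the converse is immediate. Thus the whole problem reduces to producing, for each $y$ and $r$, a point of the sphere $\partial\overline{B}_{r}(x)$ at distance exactly $d(x,y)+r$ from $y$. To construct a candidate I would take a geodesic segment $c$ from $y$ to $x$ and, using that $(X,d)$ is geodesically complete under the standing hypotheses, continue it through $x$ to a local geodesic, setting $p=c(d(x,y)+r)$ on the far side of $x$. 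That the sub-arc from $x$ to $p$ is distance realizing, so that $d(x,p)=r$ and $p\in\partial\overline{B}_{r}(x)$, follows from $(1)$ — equivalently from $MinCut(x)=\emptyset$ — applied to the geodesic segment issuing from $x$ in that direction.

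The hard part will be showing that the resulting curve realizes distance \emph{across} the center, i.e.\ that $d(y,p)=d(x,y)+r$ and not merely $\leq$. This ``no shortcut through the center'' property is exactly the geometric content of strong geodesic extendibility, and it is where the full force of the hypothesis — extendibility in every direction at $x$ together with the absence of minimal cut points — must be used, since $MinCut(x)=\emptyset$ by itself only controls minimality along each ray issuing from $x$ and not the concatenation straddling $x$. Here I would argue by compactness: the sphere $\partial\overline{B}_{r}(x)$ is a closed subset of the compact ball $\overline{B}_{r}(x)$ (Proposition \ref{tubular}), so $q\mapsto d(y,q)$ attains a maximum on it, and the task becomes to show this maximum equals $d(x,y)+r$. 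I expect to obtain this via a limiting argument that extracts the ray from $x$ antipodal to $y$ and uses that every geodesic issuing from $x$ minimizes indefinitely to rule out competing shortcuts.

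Finally, to close the cycle I would deduce $(1)$ (or directly $(2)$) from $(3)$ by reversing geodesics and exploiting the freedom in the choice of $y$ in $(3)$: the delicate point is that $(3)$ prolongs curves \emph{through} the center while $(1)$ extends a given geodesic past its \emph{far} endpoint, so the reconciliation must pass through the antipodal statement established above, combined with the reversibility of geodesic segments and the already proven equivalence $(1)\Leftrightarrow(2)$. I anticipate that keeping careful track of which geodesic segment is being extended — rather than merely which pairs of points are joined — is the subtlety to watch in this last step.
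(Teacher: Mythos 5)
Your loop $(2)\Rightarrow(1)\Rightarrow(2)$ is correct and is essentially the paper's own argument: the paper writes out only $(1)\Rightarrow(2)$, by exactly the iteration-and-gluing you describe, and treats everything else as routine. The genuine gap in your proposal is that the two implications involving $(3)$ are never actually proved: the crucial step ``$d(y,p)=d(x,y)+r$ and not merely $\leq$'' is deferred to an unspecified ``limiting argument,'' and the closing step $(3)\Rightarrow(1)$ is likewise left as an anticipated difficulty. These are not finishing touches, and the asymmetry you flag --- $(1)$ and $(2)$ extend a segment \emph{emanating} from $x$ past its \emph{far} endpoint, while $(3)$ prolongs a segment \emph{arriving} at $x$ through $x$ itself --- is not a bookkeeping subtlety but a genuine obstruction. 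Nothing in your construction (continue a local geodesic through $x$, then invoke $MinCut(x)=\varnothing$ on the outgoing piece) forces the concatenation to minimize \emph{across} $x$, and indeed the implication fails as literally stated: in $X=[0,\infty)$ with $x=0$, every geodesic segment issuing from $0$ extends to the ray, so $(1)$ and $(2)$ hold at $0$, yet $\partial\overline{B}_{r}(0)=\{r\}$ and $d(y,r)=|y-r|<d(0,y)+r$ for every $y>0$, so $(3)$ fails. The reverse direction fails as well: glue two copies of $[0,\infty)$ and one copy of $[0,1]$ at their left endpoints to form a tripod with vertex $o$; at $x=o$ every incoming segment prolongs minimally through $o$ into one of the infinite legs, so $(3)$ holds, but the segment from $o$ to the tip of the finite leg admits no geodesic extension, so $(1)$ fails.

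So the plan you outline cannot be completed for the pointwise statement; the missing ``limiting argument'' does not exist. To be fair, the paper's proof is no more forthcoming here --- it declares $(2)\Rightarrow(3)$ and $(3)\Rightarrow(1)$ ``easy to verify'' and supplies nothing --- so you have at least correctly located where the difficulty lies. The equivalence with $(3)$ is only salvageable under the global hypothesis used elsewhere in the paper (strong geodesic extendibility at \emph{every} point): then $(3)$ at $x$ follows by applying extendibility \emph{at $y$} to a segment from $y$ to $x$, which is extended past its far endpoint $x$ by $r$; and even then the converse $(3)\Rightarrow(1)$ only yields extendibility of \emph{some} geodesic between each pair of points, not of a prescribed segment, which is precisely the distinction your last paragraph worries about. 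If you want a correct and provable statement, keep your pointwise $(1)\Leftrightarrow(2)$ and reformulate $(3)$ either globally or as a statement about prescribed segments ending at $x$.
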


\begin{proof}
Assertions (2) $\Rightarrow$ (3) and (3) $\Rightarrow$ (1) are easy to verify. Thus, let us assume that strong geodesic extendibility holds at $x\in X$ and fix a geodesic segment $\gamma:[0,a]\to X$ such that $\gamma(0)=x$. We want to prove that there exists a geodesic ray $\tilde{\gamma}:[a,\infty)\to X$ such that $\tilde{\gamma}|_{[0,a]}=\gamma$. Since strong geodesic extendibility holds at $x$, for the geodesic $\gamma$ and $a>0$ there exists a geodesic segment $\gamma_1:[0,2a]\to X$ such that $\gamma_1|_{[0,a]}=\gamma$. Now, for the geodesic segment $\gamma_1$ and $a>0$ there exists a a geodesic segment $\gamma_{2}:[0,3a]\to X$ such that $\gamma_2|_{[0,2a]}=\gamma_1$ and therefore $\gamma_{2}|_{[0,a]}=\gamma$. Continuing with this construction we get a geodesic segment $\gamma_{n}:[0,(n+1)a]\to X$ such that $\gamma_{n}|_{[0,ka]}=\gamma_{k-1}$ for all $2\leq k\leq n$ and $\gamma_{n}|_{[0,a]}=\gamma$. Define $\tilde{\gamma}:[0,\infty)\to X$ as follows: for $t\in[0,\infty)$ take a large enough $n$ such that $t\leq (n+1)a$ and set $\tilde{\gamma}(t)=\gamma_{n}(t)$. The path $\tilde{\gamma}$ is well defined and it is a geodesic ray by construction. In addition $\tilde{\gamma}|_{[0,a]}=\gamma$ as we required.
\end{proof}

\begin{example}\label{Diamonds}
For every $n\in\mathbb{Z}$ define
\begin{equation*}
C_n = \{(a,b)\in \mathbb{R}^{2}: |x-n|+|y|=1\}.
\end{equation*}
Now, let us consider the space $C = \displaystyle\cup_{n\in\mathbb{Z}} C_{2n}$, endowed with the intrinsic metric induced by the restriction of the Euclidian metric of $\mathbb{R}^{2}$ on $C$. It is not difficult to check that any point of the set
\begin{equation*}
S=\{(2n+1,0)\in \mathbb{R}^{2}: n\in\mathbb{Z}\},
\end{equation*}
satisfies geodesic extendibility. Furthermore, by applying the same arguments as in  Example \ref{Diamond} we can show that no other point in $C$ satisfies geodesic extendibility. Thus  the set of points in which this property holds is discrete and infinite.
\end{example}

A closer look at Examples \ref{Diamond} and \ref{Diamonds} reveals that the set of points in which strong geodesic extendibility holds is  closed. This is a general feature, as the following result shows.

\begin{proposition}
Let $(X,d)$ be a complete and locally compact length space. Then the set of points in which strong geodesic extendibility holds is closed in $(X,d)$.
\end{proposition}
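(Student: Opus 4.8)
The plan is to characterize strong geodesic extendibility at a point through condition (3) of Proposition \ref{Equivalences} and to show that this condition passes to limits by a standard Arzel\`a--Ascoli argument. Let $E$ denote the set of points at which strong geodesic extendibility holds, and suppose $x_n \to x$ with $x_n \in E$ for all $n$. Fixing an arbitrary $y \neq x$ and an arbitrary $r > 0$, my goal is to produce a geodesic segment from $y$ through $x$ that extends a distance $r$ past $x$ and terminates at a point of $\partial\overline{B}_r(x)$; since $y$ and $r$ are arbitrary, this establishes condition (3) at $x$ and hence $x \in E$.

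First I would, for all $n$ large enough that $x_n \neq y$, invoke condition (3) at $x_n$ but with the slightly larger radius $r+1$. This yields geodesic segments $\gamma_n \colon [0, d(x_n,y)+r+1] \to X$ with $\gamma_n(0) = y$ and $\gamma_n(d(x_n,y)) = x_n$. The point of inflating the radius is twofold: it sidesteps the fact that the natural domains $[0, d(x_n,y)+r]$ merely converge to $[0, d(x,y)+r]$ rather than containing it, and it will later certify that the limiting endpoint genuinely lies on the boundary sphere. Concretely, writing $L = d(x,y)+r$, for $n$ large we have $d(x_n,y)+r+1 \ge L + \tfrac{1}{2}$, so each $\gamma_n$ is defined on the fixed interval $[0, L+\tfrac{1}{2}]$. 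On this interval the $\gamma_n$ are parameterized by arc length, hence $1$-Lipschitz, and since $d(y,\gamma_n(t)) = t$ they take values in the closed ball $\overline{B}_{L+1/2}(y)$, which is compact because $(X,d)$ is complete and locally compact.

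Next I would extract, via Arzel\`a--Ascoli, a subsequence converging uniformly on $[0,L+\tfrac{1}{2}]$ to a curve $\gamma$. Passing to the limit in the isometry relation $d(\gamma_n(s),\gamma_n(t)) = |s-t|$ shows that $\gamma$ satisfies the same relation, so $\gamma$ is a geodesic segment. The endpoint bookkeeping is then routine: $\gamma(0) = y$ is immediate, while combining uniform convergence with $d(x_n,y) \to d(x,y)$ and $x_n \to x$ gives $\gamma(d(x,y)) = x$. Setting $p = \gamma(L)$, distance realization yields $d(x,p) = r$; moreover $\gamma(L+s)$ for small $s > 0$ satisfies $d(x,\gamma(L+s)) = r + s > r$, so points lying outside $\overline{B}_r(x)$ accumulate at $p$ and therefore $p \in \partial\overline{B}_r(x)$. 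Restricting $\gamma$ to $[0,L]$ furnishes exactly the geodesic required by condition (3).

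I expect the only genuine obstacle to be the limiting step: ensuring that the curves are comparable on a common domain and that the uniform limit of geodesic segments remains a geodesic segment terminating on the correct sphere. Both difficulties are dissolved by the radius-inflation device, which forces all the $\gamma_n$ onto a single compact domain and simultaneously secures the boundary membership of the limiting endpoint; the compactness of closed balls in $(X,d)$ then makes Arzel\`a--Ascoli directly applicable.
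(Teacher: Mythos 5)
Your proof is correct and follows essentially the same route as the paper's: verify condition (3) of Proposition \ref{Equivalences} at the limit point by applying Arzel\`a--Ascoli to the geodesic segments supplied by condition (3) at the approximating points, and pass the arc-length identity to the limit. The radius-inflation device is a clean way to put all the curves on a common compact domain (and to certify $p\in\partial\overline{B}_r(x)$), where the paper instead handles the varying domains by a reparameterization at the end; this is a technical refinement rather than a different argument.
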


\begin{proof}
Suppose $\{x_n\}_{n=1}^{\infty}$ is a sequence converging to $x$ in which strong geodesic extendibility holds.  Fix $y\neq x$ and $r>0$. Moreover, take $R>\max\{d(x,y),r\}$ large enough such that there exists $N\in\mathbb{N}$ with $x_n\subset \overline{B}_{R}(x)$ for all $n\geq N$. We shall prove that there exists $p\in \partial \overline{B}_{r}(x)$ and a geodesic segment $\gamma:[0, d(x,y)+r]\to X$ such that $\gamma(0)=y$, $\gamma(d(x,y))=x$ and $\gamma(d(x,y)+r)=p$. Because strong geodesic extendibility  holds in $x_n$ for every $n\in\mathbb{N}$, there exists $p_n\in\partial\overline{B}_{r}(x_n)$ and a geodesic segment $\gamma_{n}:[0,d(x_n,y)+r]\to X$ such that $\gamma_n(0)=y$, $\gamma_n(d(x_n,y))=x_n$ and $\gamma_n(d(x_n,y)+r)=p_n$. Moreover, since every geodesic segment $\gamma_n$ has length equals to $d(x_n,y)<R$, then Arzela-Ascoli Theorem (Theorem 2.5.14 of \cite{Burago}) in $\overline{B}_{R}(x)$ ensures the existence of a converging subsequence $\{\gamma_{n_i}\}_{i=1}^{\infty}$. Set $\gamma=\displaystyle\lim_{i\to\infty} \gamma_{n_i}$ and $p=\displaystyle\lim_{i\to \infty} p_{n_i}$. Using Proposition 2.5.17 of \cite{Burago} we get that $\gamma$ is a geodesic segment joining $y$ with $p$. Also, is clear  that $p\in\overline{B}_{r}(x)$ and $\displaystyle\lim_{i\to\infty} \gamma_{n_i}(d(x_i,y)) =x$ since $d(x_i,y)$ converges to $d(x,y)$ when $n\to\infty$. By a suitable reparameterization we can define $\gamma$ on  the interval $[0,d(x,y)+r]$, thus obtaining the desired geodesic segment.
\end{proof}

\section{An isometry between $(\Sigma(X),d_H)$ and $(X\times \mathbb{R}_{\geq 0}, d_T)$}\label{sec:main}

In this section we analyze the geodesic structure of the space of closed balls of a locally compact and complete length space $(X,d)$. First recall that for such an space closed metric balls are compact and the metric $d$ is intrinsic. Thus we can define the space $\Sigma (X)$ of compact balls of $(X,d)$ as
\begin{equation*}
\Sigma(X) = \{\overline{B}_{r}(x): p\in X, r\geq 0\},
\end{equation*}

The key idea in our approach consists in comparing $(\Sigma (X),d_H)$ with the space $X\times\mathbb{R}_{\geq 0}$ endowed with the taxicab metric $d_T$ given by
\begin{equation*}
d_T((x,t), (y,s)) = d(x,y)+ |t-s|.
\end{equation*}
In particular, we are interested in finding necessary and sufficient conditions for the map $f:X\times\mathbb{R}_{\geq 0}\to \Sigma(X)$ given by
\begin{equation*}
f(x,t)=\overline{B}_{t}(x).
\end{equation*}
to be an isometry. Note that $f$ need not be even injective in general, as the following example shows:

\begin{example}
Let us consider the metric space
\begin{equation*}
X=([-1,\infty)\times\{0\})\cup (\{0\}\times [0,1])\subset\mathbb{R}^2
\end{equation*}
with the metric length induced from the Euclidean metric of $\mathbb{R}^2$. Let $P=(-1,0)$ and $Q=(0,1)$, and notice then that $f(P,2)=\overline{B}_2(P)=\overline{B}_2(Q)=f(Q,2)$.
\end{example}

As it turns out,  the lack of injectivity is one of the main obstruction for the map $f$ to be an isometry (see the proof of Theorem \ref{main} below). We begin by showing first that $f$ is a $1$-Lipschitz map.

\begin{proposition}
Let $(X,d)$ a complete and locally compact length space. Then the function $f:(X\times\mathbb{R}_{\geq 0},d_T)\to (\Sigma(X),d_H)$, $f(x,t)=\overline{B}_{t}(x)$ is $1-$Lipschitz.
\end{proposition}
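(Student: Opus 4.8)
The plan is to estimate the Hausdorff distance directly from its definition, exhibiting the single candidate radius $r = d_T((x,t),(y,s)) = d(x,y)+|t-s|$ and showing that both inclusions $\overline{B}_t(x) \subset \overline{U}_r(\overline{B}_s(y))$ and $\overline{B}_s(y) \subset \overline{U}_r(\overline{B}_t(x))$ hold. Once this is established, the characterization of $d_H$ via closed tubular neighborhoods (formulation (1) in the definition of $d_H$) immediately yields $d_H(\overline{B}_t(x),\overline{B}_s(y)) \leq r$, which is precisely the $1$-Lipschitz bound. The degenerate cases in which a radius vanishes are immediate, so I would assume $r>0$ and $s,t>0$ to invoke Proposition \ref{tubular}.

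The decisive simplification comes from that proposition: since a closed ball is the closed tubular neighborhood of its center, $\overline{B}_s(y) = \overline{U}_s(\{y\})$, and hence $\overline{U}_r(\overline{B}_s(y)) = \overline{U}_r(\overline{U}_s(\{y\})) = \overline{U}_{r+s}(\{y\}) = \overline{B}_{r+s}(y)$. Thus the first inclusion reduces to $\overline{B}_t(x) \subset \overline{B}_{s+r}(y)$. To verify it, I would take $z$ with $d(z,x) \leq t$; the triangle inequality gives $d(z,y) \leq d(z,x)+d(x,y) \leq t+d(x,y)$, and since $t-s \leq |t-s|$ we get $t+d(x,y) \leq s+|t-s|+d(x,y) = s+r$, so $z \in \overline{B}_{s+r}(y)$. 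The second inclusion follows symmetrically, exchanging the roles of $(x,t)$ and $(y,s)$ and using $s-t\leq|t-s|$.

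Because both inclusions hold with this common radius $r$, the infimum defining $d_H$ is at most $r$, yielding $d_H(\overline{B}_t(x),\overline{B}_s(y)) \leq d(x,y)+|t-s| = d_T((x,t),(y,s))$, as required. There is no genuine obstacle here; the only point requiring care is tracking the direction of the one-sided inequalities $t-s\leq|t-s|$ and $s-t\leq|t-s|$, which is exactly what forces the absolute value $|t-s|$ rather than a signed difference to appear in the taxicab metric and shows why the estimate is sharp in the radial direction.
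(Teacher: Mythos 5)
Your proof is correct, and it takes a slightly different route from the paper's. The paper works with the formulation $d_H(A,B)=\max\{\sup_{a\in A}\dist(a,B),\,\sup_{b\in B}\dist(b,A)\}$ and estimates $\dist(a,\overline{B}_s(y))=d(a,y)-s\le t+d(x,y)-s$ for $a\in\overline{B}_t(x)\setminus\overline{B}_s(y)$, then takes the maximum of the two one-sided bounds to get $d(x,y)+|t-s|$. You instead use the tubular-neighborhood formulation together with Proposition \ref{tubular} (equivalently, the displayed formula $d_H(\overline{B}_t(x),\overline{B}_s(y))=\inf\{r:\overline{B}_t(x)\subset\overline{B}_{s+r}(y),\ \overline{B}_s(y)\subset\overline{B}_{t+r}(x)\}$ that the paper records immediately after that proposition), reducing everything to the nested-ball inclusions $\overline{B}_t(x)\subset\overline{B}_{s+r}(y)$ and $\overline{B}_s(y)\subset\overline{B}_{t+r}(x)$, each of which is a one-line triangle inequality. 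The underlying inequality $d(z,y)\le d(z,x)+d(x,y)\le t+d(x,y)$ is the same in both arguments; what differs is where the length-space hypothesis enters. In the paper it is needed to justify $\dist(a,\overline{B}_s(y))=d(a,y)-s$ (a point the paper glosses over), while in your version it is packaged inside Proposition \ref{tubular}, where the inclusion $\overline{B}_{r+s}(y)\subset\overline{U}_r(\overline{B}_s(y))$ is the nontrivial direction. Your handling of the degenerate cases $r=0$ or $s=0$ or $t=0$ is appropriate, and your closing remark correctly identifies why the absolute value, rather than a signed difference, must appear.
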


\begin{proof} By definition we have
\begin{equation*}
d_H(f(x,t),f(y,s))  = \max\left\{ \sup_{a\in \overline{B}_{t}(x)} \dist(a,\overline{B}_{s}(y)),
\sup_{b\in \overline{B}_{s}(y)} \dist(b,\overline{B}_{t}(x)) \right\}.
\end{equation*}
Thus, for every $a\in\overline{B}_{t}(x)\setminus\overline{B}_{s}(y)$ we have 
\begin{equation*}
\dist(a,\overline{B}_{s}(y)) = d(a,y)-s\leq  d(a,x) + d(x,y)-s\leq t+d(x,y)-s.
\end{equation*}
 Thus
\begin{equation*}
\sup_{a\in \overline{B}_{t}(x)} \dist(a,\overline{B}_{s}(y)) \leq d(x,y) + t-s.
\end{equation*}
Proceeding in a similar way, we have
\begin{equation*}
\sup_{b\in \overline{B}_{s}(y)} \dist(b,\overline{B}_{t}(x))\leq d(x,y)+s-t.
\end{equation*}
Hence
\begin{equation*}
d_H(f(x,t),f(y,s)) \leq \max\{d(x,y)+t-s,d(x,y)+s-t\} = d_T((x,t),(y,s)).
\end{equation*}
\end{proof}

In order to get a better insight, let us assume for the time being that $f$ is an isometry and consider the closed ball $\overline{B}_r(x)$. Let $y\neq x$ then
\begin{equation*}
\dist(y,\overline{B}_{r}(x)) \leq d(x,y) \leq \displaystyle\sup_{a\in\overline{B}_{r}(x)} d(a,y),
\end{equation*}
thus
\begin{equation*}
d_H(f(x,r),f(y,0)) = d_H(\overline{B}_{r}(x),\{y\}) =  \displaystyle\sup_{a\in\overline{B}_{r}(x)} d(a,y).
\end{equation*}
Since $f$ is an isometry we have $\displaystyle\sup_{a\in\overline{B}_{r}(x)} d(a,y) = d(x,y) + r$. Furthermore, since  $\overline{B}_{r}(x)$ is a compact set, there exists $p\in \overline{B}_r(x)$ with
\begin{equation*}
d(p,y) = \displaystyle\sup_{a\in\overline{B}_{r}(x)} d(a,y) = d(x,y) + r.
\end{equation*}
Now we show that $p\in \partial \overline{B}_r(x)$. Indeed, notice that
\begin{equation*}
r + d(x,y) = d(p,y) \leq d(p,x) + d(x,y) \leq r + d(x,y),
\end{equation*}
hence $d(p,x)=r$.

Finally, let us observe that $x,y,p$ are all collinear. Let  $\alpha:[0,d(x,y)]:\to X$ and $\beta:[0,r]\to X$
be geodesic segments ---parameterized with respect to arc length--- such that $\alpha(0)=y$, $\alpha(d(x,y))=x=\beta(0)$ and $\beta(r)=p$. Recall such paths exist since $(X,d)$ is geodesic. Consider then the path $\gamma:[0,d(x,y)+r]\to X$ given by $\gamma=\alpha *\beta$ and let us show that for all $t,s\in[0,\ell+r]$ we have $d(\gamma(t),\gamma(s))=|t-s|$. If $t,s\in[0,d(x,y)]$ or $t,s\in[d(x,y), d(x,y)+r]$ then there is nothing to prove, since $\alpha$ and $\beta$ are geodesic segments parameterized by arc length. Thus let $t\in [0,d(x,y)]$ and $s\in[d(x,y),d(x,y)+r]$ and notice
\begin{align*}
d(y,p) &= d(\gamma(0),\gamma(t)) + d(\gamma(t),\gamma(s)) + d(\gamma(s),\gamma(\ell+r)) \\
&\leq d(\gamma(0),\gamma(t)) + d(\gamma(t),\gamma(\ell)) + d(\gamma(\ell),\gamma(s)) + d(\gamma(s),\gamma(\ell+r)) \\
&= \ell+r = d(y,x) + d(x,p) = d(y,p).
\end{align*}
It then follows that 
\begin{equation*}
d(\gamma(t),\gamma(s))=s-t=|t-s|,
\end{equation*} 
which in turn implies that  $\gamma$ is a geodesic segment. 

According to Proposition \ref{Equivalences}, the above discussion shows that if $f$ is an isometry then $(X,d)$ is strongly geodesically complete.   Notice however, that this is no longer true if we assume that $f$ is merely injective, as the next example shows.

\begin{example}\label{ex:noshoot}
Consider the half space $X=\mathbb{R}\times \mathbb{R}_{\geq 0}$ and the metric $d$ given by the restriction to $X$ of the Euclidian metric from $\mathbb{R}^{2}$. As pointed out in Example \ref{ex:nobdry}, this metric space is not strongly geodesically complete, although $f$ is clearly injective in this case.
\end{example}

We are now ready to establish the main result characterizing the space of compact balls in strongly geodesically complete spaces.

\begin{theorem}\label{main}
Let $(X,d)$ be a locally compact and complete length space. Then the map $f:(X\times\mathbb{R}_{\geq 0},d_T)\to (\Sigma(X),d_H)$, $f(x,t)=\overline{B}_{t}(x)$ is an isometry if and only if $(X,d)$ is strongly geodesically complete.
\end{theorem}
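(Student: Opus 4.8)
The plan is to prove both implications by relating the isometry property of $f$ to condition (3) of Proposition \ref{Equivalences}. Since $f$ is already $1$-Lipschitz and is surjective directly from the definition of $\Sigma(X)$, the crux in each direction is the reverse inequality $d_H(f(x,t),f(y,s)) \geq d_T((x,t),(y,s))$ and how the existence of a suitably extended geodesic controls it.

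For the forward implication I would simply package the computation already carried out in the discussion preceding the theorem. Assuming $f$ is an isometry, applying it to the pairs $(\overline{B}_r(x),\{y\}) = (f(x,r),f(y,0))$ with $y \neq x$ forces $\sup_{a\in\overline{B}_r(x)} d(a,y) = d(x,y)+r$; compactness of $\overline{B}_r(x)$ then yields a point $p\in\partial\overline{B}_r(x)$ with $d(y,p)=d(x,y)+r$ that is collinear with $x$ and $y$. This is precisely condition (3) of Proposition \ref{Equivalences} at the arbitrary point $x$, so $(X,d)$ is strongly geodesically complete.

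For the reverse implication, assume $(X,d)$ is strongly geodesically complete and fix $(x,t)$ and $(y,s)$ with, without loss of generality, $t\geq s$, so that $d_T=d(x,y)+t-s$. When $x\neq y$, I would apply condition (3) of Proposition \ref{Equivalences} with radius $r=t$ to obtain $p\in\partial\overline{B}_t(x)$ with $d(x,p)=t$ and $d(y,p)=d(x,y)+t$. Since $x\neq y$ gives $d(x,y)>0$ and $t\geq s$, we have $d(y,p)=d(x,y)+t>s$, so $p$ lies outside $\overline{B}_s(y)$ and $\dist(p,\overline{B}_s(y))=d(y,p)-s=d(x,y)+t-s$ (the competing point being the one at arc length $s$ from $y$ along the distance-realizing geodesic $y\to x\to p$). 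As $p\in\overline{B}_t(x)$, this forces $\sup_{a\in\overline{B}_t(x)}\dist(a,\overline{B}_s(y))\geq d(x,y)+t-s$, whence $d_H\geq d_T$; combined with the $1$-Lipschitz bound this gives equality.

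The main obstacle, and the only genuinely delicate point, is the degenerate concentric case $x=y$, where condition (3) does not apply verbatim, together with the need to ensure throughout that the witness point truly lies outside the smaller ball so that the clean value of $\dist(p,\overline{B}_s(y))$ is attained. For $x=y$ with $t>s$ I would instead invoke strong geodesic extendibility to extend any nonconstant geodesic issuing from $x$ out to arc length $t$, producing $q$ with $d(x,q)=t$; then $\dist(q,\overline{B}_s(x))=t-s$ yields the required bound $d_H\geq t-s=d_T$. Assembling these cases completes the reverse inequality, and since $f$ is surjective by construction and distance-preserving (hence injective), it is the desired isometry.
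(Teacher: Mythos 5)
Your proposal is correct and follows essentially the same route as the paper: the forward direction packages the computation in the discussion preceding the theorem, and the reverse direction uses the same witness point $p=\gamma(\ell+t)$ supplied by Proposition \ref{Equivalences}(3) to force $d_H\ge d_T$, combined with the $1$-Lipschitz upper bound. Your organization is marginally cleaner in that distance preservation renders the paper's separate injectivity argument unnecessary, and your case split ($x\ne y$ versus the concentric case $x=y$, $t>s$) replaces the paper's split on whether $d_H$ vanishes and on which supremum is positive.
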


\begin{proof}
Let $x,y\in X$, $t,s\geq 0$, $\ell=d(x,y)$. We first prove that $f$ is injective. Let us assume that $d_H(\overline{B}_t(x), \overline{B}_{s}(y) ) = 0$. Thus
    \begin{equation*}
    \displaystyle\sup_{a\in\overline{B}_{t}(x)} \dist(a,\overline{B}_{s}(y)) =0,
    \end{equation*}
    that is, $\dist(a,\overline{B}_{s}(y))=0$ for all $a\in \overline{B}_{t}(x)$. Hence we have  $\overline{B}_{t}(x)\subset \overline{B}_{s}(y)$. We can show in an analogous way that $\overline{B}_{s}(y) \subset \overline{B}_{t}(x)$ and thus $\overline{B}_{s}(y) = \overline{B}_{t}(x)$. Since $(X,d)$ is strongly geodesically complete, then there exist geodesic segments $\alpha:[0,\ell+t]\to X$, $\beta:[0,\ell+s]\to X$, and points $p\in\partial\overline{B}_{t}(x)$, $q\in\partial \overline{B}_{s}(y)$ such that $\alpha(0)=y=\beta(\ell)$, $\alpha(\ell)=x=\beta(0)$, $p=\alpha(\ell+t)$ y $\beta(\ell+s)=q$. Moreover, since $p\in\partial\overline{B}_{t}(x)=\partial\overline{B}_{s}(y)$, then we have $d(y,p)=s$. Hence
    \begin{equation*}
    t + \ell = d(p,x) + d(x,y) = d(p,y) = s.
    \end{equation*}
  By a similar argument we have $s+\ell = t$. It then follows that  $s=t$ and $\ell=0$, hence $(x,t)=(y,s)$. As a consequence we have that $f$ is injective.

 Let us consider now $d_H(\overline{B}_t(x), \overline{B}_{s}(y) ) >0 $. Without loss of generality we can further assume
    \begin{equation*}
    \displaystyle\sup_{a\in\overline{B}_{t}(x)} \dist(a,\overline{B}_{s}(y)) >0.
    \end{equation*}
    Therefore we have
    \begin{align*}
    \sup\limits_{a\in\overline{B}_{t}(x)} \dist(a,\overline{B}_{s}(y)) &=
    \sup \{ d(a,y) | {a\in\overline{B}_{t}(x)\setminus \overline{B}_{s}(y) }\} -s \\
    &\leq
     \sup \{ d(a,x) + d(x,y) | {a\in\overline{B}_{t}(x)\setminus \overline{B}_{s}(y) } \} - s \\
    &\leq d(x,y) + t -s.
    \end{align*}
   On the other hand, since $(X,d)$ is strongly geodesically complete, there exist a geodesic segment $\gamma:[0,\ell+t]\to X$ and a point $p\in \partial\overline{B}_{t}(x)$ such that $\gamma(0)=y$, $\gamma(\ell)=x$ and $\gamma(\ell+t)=p$. Thus $d(p,y) = d(p,x) + d(x,y) = d(x,y) +t$ and hence
    \begin{equation*}
    \displaystyle\sup_{a\in\overline{B}_{t}(x)} \dist(a,\overline{B}_{s}(y)) = d(x,y)  + t - s.
    \end{equation*}
    Moreover, if
    \begin{equation*}
    \displaystyle\sup_{b\in\overline{B}_{s}(y)} \dist(b,\overline{B}_{t}(x)) >0,
    \end{equation*}
    then by a similar argument we end up with
    \begin{equation*}
    \displaystyle\sup_{b\in\overline{B}_{s}(y)} \dist(b,\overline{B}_{t}(x)) = d(x,y)  +s-t
    \end{equation*}
    and therefore
    \begin{align*}
    d_H(\overline{B}_{t}(x), \overline{B}_{s}(y)) &=
    \max\left\{
    \displaystyle\sup_{a\in\overline{B}_{t}(x)} \dist(a,\overline{B}_{s}(y)),
    \displaystyle\sup_{b\in\overline{B}_{s}(y)} \dist(b,\overline{B}_{t}(x))
    \right\} \\
    &=  \max\{ d(x,y) + t-s , d(x,y) + s-t\} = d(x,y) + |t-s|.
    \end{align*}
    In the case
    \begin{equation*}
    \displaystyle\sup_{b\in\overline{B}_{s}(y)} \dist(b,\overline{B}_{t}(x)) =0
    \end{equation*}
    we have $\overline{B}_{s}(y) \subset \overline{B}_{t}(x)$, so $s\leq t$. Thus \begin{equation*}
d_H(\overline{B}_{t}(x), \overline{B}_{s}(y)) =\max\left\{ \displaystyle\sup_{a\in\overline{B}_{t}(x)} \dist(a,\overline{B}_{s}(y)), 0 \right\} = d(x,y) + |t-s|.
\end{equation*}
Hence $f$ is an isometry.
\end{proof}

\section{Applications}\label{sec:apps}

\subsection{Stability of strong geodesic completeness}

Let us recall that a sequence of metric spaces $(X_n, d_n)$ converges uniformly to a metric space $(X,d)$ if there exist metrics $\bar{d}_{n}$ such that every metric space $(X_n,d_n)$ is isometric to $(X,\bar{d}_n)$ and the sequence of metrics $\bar{d}_n$ converges uniformly to $d$ \cite{Burago}, that is,
\begin{equation*}
\lim_{n\to\infty} \displaystyle\sup_{(a,b)\in X\times X} |\bar{d}_n(a,b)-d(a,b)| = 0
\end{equation*}
Since every metric space $(X_n,d_n)$ is isometric to $(X,\bar{d}_n)$ we might even consider that $X_n=X$ and $\bar{d}_n= d_n$, just to simplify the notation.

We want to establish a relation between the Hausdorff distance in $X$ and the Hausdorff distance in $X_n$. In order to do this, let us denote by $d_H^{n}$ the Hausdorff distance in $(X_n,d_n)$ and the closed ball with center $x$ an radius $t$ by $\overline{B}_{t}^{n}(x)$.

\begin{lemma}\label{ballconvergence}
Let $(X_n,d_n)$ be a sequence of length spaces that converges uniformly to a length space $(X,d)$ and fix $x\in X$, $t\in \mathbb{R}_{\geq 0}$. Then for every $\varepsilon>0$ there is an $N\in\mathbb{N}$ such that
\begin{equation*}
\overline{B}_{t}(x) \subseteq \overline{B}_{t+\varepsilon}^{n}(x) \ \mbox{and} \ \overline{B}_{t}^{n}(x) \subseteq \overline{B}_{t+\varepsilon}(x),\ \forall n\ge N.
\end{equation*}
\end{lemma}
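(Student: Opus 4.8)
The plan is to leverage the uniform convergence of the metrics directly, as the statement is essentially a quantitative reformulation of that convergence for balls. First I would fix $\varepsilon > 0$ and invoke the definition of uniform convergence to extract $N \in \mathbb{N}$ such that
\begin{equation*}
\sup_{(a,b)\in X\times X} |d_n(a,b) - d(a,b)| < \varepsilon \quad \text{for all } n \geq N.
\end{equation*}
The crucial feature here is that this single $N$ controls the discrepancy between $d_n$ and $d$ simultaneously at every pair of points; since the closed balls typically contain infinitely many points, a merely pointwise comparison would not suffice, and it is precisely the uniformity built into the supremum that makes the argument go through.

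With this $N$ in hand, both inclusions follow from the same elementary estimate applied in the two directions. For the first inclusion I would take any $y \in \overline{B}_t(x)$, so that $d(x,y) \leq t$, and then estimate
\begin{equation*}
d_n(x,y) \leq d(x,y) + |d_n(x,y) - d(x,y)| < t + \varepsilon,
\end{equation*}
which shows $y \in \overline{B}_{t+\varepsilon}^{n}(x)$ and hence $\overline{B}_t(x) \subseteq \overline{B}_{t+\varepsilon}^{n}(x)$. Reversing the roles of $d$ and $d_n$, for any $y \in \overline{B}_t^{n}(x)$ we have $d_n(x,y) \leq t$ and therefore
\begin{equation*}
d(x,y) \leq d_n(x,y) + |d(x,y) - d_n(x,y)| < t + \varepsilon,
\end{equation*}
giving $y \in \overline{B}_{t+\varepsilon}(x)$ and hence $\overline{B}_t^{n}(x) \subseteq \overline{B}_{t+\varepsilon}(x)$. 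Since both inclusions hold for every $n \geq N$, the proof is complete.

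I do not expect any genuine obstacle in this lemma beyond correctly organizing the use of the uniform bound; the only point requiring care is to observe that the \emph{same} $N$ serves for both inclusions and for all points of the balls at once, which is guaranteed by the supremum in the definition of uniform convergence. Notably, no additional structure of the length spaces (geodesic completeness, local compactness, or the like) is invoked at this stage, so the statement is purely metric in nature.
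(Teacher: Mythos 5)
Your proof is correct and follows essentially the same route as the paper's: extract a single $N$ from the uniform convergence of the metrics so that $|d_n(a,b)-d(a,b)|$ is at most $\varepsilon$ for all pairs simultaneously, then apply the triangle-type estimate $d_n(x,y)\leq d(x,y)+\varepsilon$ and its reverse to obtain the two inclusions. No discrepancy to report.
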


\begin{proof}
Take $\varepsilon>0$. From  uniform convergence there exists $N\in\mathbb{N}$ such that
\begin{equation*}
-\varepsilon \leq d_n(a,b) - d(a,b) \leq \varepsilon,
\end{equation*}
for all $a,b\in X$ and for all $n\ge N$. Thus for every $p\in \overline{B}_{t}(x)$ we have $d(p,x)\leq t$ and therefore
\begin{equation*}
d_n(p,x) \leq d(p,x) + \varepsilon \leq t + \varepsilon,
\end{equation*}
then $d_n(p,x)\leq t+\varepsilon$, it means $p\in\overline{B}_{t+\varepsilon}^{n}(x)$. A similar procedure shows $\overline{B}_{t}^{n}(x) \subseteq \overline{B}_{t+\varepsilon}(x)$ and the proof is complete.
\end{proof}

\begin{proposition}\label{ballconvergencelimit}
Let $(X_n,d_n)$ be a sequence of locally compact length spaces that converges uniformly to a locally compact length space $(X,d)$. Then for every $x,y\in X$ and $t,s\in \mathbb{R}_{\geq 0}$ we have
\begin{equation*}
d_H(\overline{B}_{t}(x),\overline{B}_{s}(y)) =  \displaystyle\lim_{n\to\infty} d_{H}^{n}(\overline{B}_{t}^{n}(x) , \overline{B}_{t}^{n}(y)),
\end{equation*}
\end{proposition}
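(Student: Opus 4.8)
The plan is to decompose the difference between $d_H^n(\overline{B}_t^n(x),\overline{B}_s^n(y))$ and $d_H(\overline{B}_t(x),\overline{B}_s(y))$ into two independent sources of error: the discrepancy coming from computing the Hausdorff distance with $d_n$ rather than $d$, and the discrepancy coming from the fact that the ball \emph{sets} $\overline{B}_t^n(x)$ and $\overline{B}_t(x)$ do not coincide. Fix $\varepsilon>0$ and, using uniform convergence, choose $N$ so that $|d_n(a,b)-d(a,b)|\le\varepsilon$ for all $a,b\in X$ and all $n\ge N$. (Recall that uniform convergence of the metrics forces $d$ and $d_n$ to induce the same topology, so the $d_n$-closed balls are genuine $d$-compact subsets of $X$ and the Hausdorff distances below are all well defined.)

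First I would record a metric-comparison estimate: for \emph{any} two subsets $A,B\subseteq X$ one has $|d_H^n(A,B)-d_H(A,B)|\le\varepsilon$ for $n\ge N$. This is immediate from the formula $d_H(A,B)=\max\{\sup_{a\in A}\dist(a,B),\sup_{b\in B}\dist(b,A)\}$, since the pointwise bound $|d_n(a,b)-d(a,b)|\le\varepsilon$ forces $\dist_n(a,B)$ and $\dist(a,B)$ to differ by at most $\varepsilon$, and taking suprema and a maximum preserves this bound. Applying this to $A=\overline{B}_t^n(x)$ and $B=\overline{B}_s^n(y)$ reduces the problem to comparing $d_H(\overline{B}_t^n(x),\overline{B}_s^n(y))$ with $d_H(\overline{B}_t(x),\overline{B}_s(y))$, that is, to controlling how $d_H$ reacts to replacing the balls by their limit-space counterparts.

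Next I would show that the ball sets converge in the $d$-Hausdorff metric, namely $d_H(\overline{B}_t^n(x),\overline{B}_t(x))\le 2\varepsilon$ for $n\ge N$, and likewise for $\overline{B}_s^n(y)$ and $\overline{B}_s(y)$. Here Lemma \ref{ballconvergence} supplies the inclusions $\overline{B}_t^n(x)\subseteq\overline{B}_{t+\varepsilon}(x)$ and $\overline{B}_t(x)\subseteq\overline{B}_{t+\varepsilon}^n(x)$. For the first inclusion, Proposition \ref{tubular} applied to $\overline{B}_t(x)=\overline{U}_t(\{x\})$ gives $\overline{B}_{t+\varepsilon}(x)=\overline{U}_\varepsilon(\overline{B}_t(x))$, so every point of $\overline{B}_t^n(x)$ lies within $d$-distance $\varepsilon$ of $\overline{B}_t(x)$. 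For the reverse direction one must pass from $d_n$-information to $d$-information: given $a\in\overline{B}_t(x)$, either $a$ already lies in $\overline{B}_t^n(x)$, or the point $c$ at $d$-arclength $t-2\varepsilon$ along a $d$-geodesic from $x$ to $a$ lies in $\overline{B}_t^n(x)$ — since $d_n(c,x)\le d(c,x)+\varepsilon=t-\varepsilon\le t$ — and satisfies $d(a,c)=d(a,x)-(t-2\varepsilon)\le 2\varepsilon$; such geodesics exist because $(X,d)$ is a complete and locally compact length space and hence geodesic.

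Finally I would combine the two estimates using the triangle inequality for the genuine metric $d_H$ on compact subsets:
\begin{equation*}
|d_H(\overline{B}_t^n(x),\overline{B}_s^n(y))-d_H(\overline{B}_t(x),\overline{B}_s(y))|\le d_H(\overline{B}_t^n(x),\overline{B}_t(x))+d_H(\overline{B}_s^n(y),\overline{B}_s(y))\le 4\varepsilon,
\end{equation*}
which together with the metric-comparison estimate yields $|d_H^n(\overline{B}_t^n(x),\overline{B}_s^n(y))-d_H(\overline{B}_t(x),\overline{B}_s(y))|\le 5\varepsilon$ for all $n\ge N$. Letting $\varepsilon\to 0$ gives the claimed limit. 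I expect the main obstacle to be the reverse inclusion in the ball-set convergence step: the inclusion handed down by Lemma \ref{ballconvergence} controls the $d_n$-radius, whereas the Hausdorff distance on the right is measured with $d$, so one genuinely needs to convert a $d_n$-estimate into a $d$-estimate, which is precisely where both the uniform convergence bound and the geodesic structure of $(X,d)$ must be invoked together.
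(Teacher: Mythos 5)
Your proof is correct, but it takes a genuinely different route from the paper's. The paper never compares the two ball \emph{sets} inside a single Hausdorff metric: it works entirely with the inclusion formula $d_H(A,B)=\inf\{r:A\subseteq\overline{B}_{s+r}(y),\ B\subseteq\overline{B}_{t+r}(x)\}$, takes an $r$ admissible for $d_H^n$, and converts it into an $(r+\varepsilon)$-admissible radius for $d_H$ by chaining the inclusions of Lemma \ref{ballconvergence} with the tubular-neighborhood identity $\overline{U}_t(\overline{U}_s(A))=\overline{U}_{t+s}(A)$ applied in both $(X_n,d_n)$ and $(X,d)$; both error sources are absorbed in one pass and the bound obtained is $|d_H-d_H^n|\le\varepsilon$. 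Your decomposition into a metric-comparison error and a set-comparison error is more modular and each half is of independent interest: the estimate $|d_H^n(A,B)-d_H(A,B)|\le\varepsilon$ holds for arbitrary subsets, and the statement $d_H(\overline{B}_t^n(x),\overline{B}_t(x))\to 0$ is a clean convergence of balls in $(\mathcal{H}(X),d_H)$. The price is that the reverse inclusion in that second half genuinely requires the geodesic structure of $(X,d)$ (hence completeness, which the paper also tacitly assumes), whereas the paper's route sidesteps this because the inclusion-based formula only ever asks for containments in enlarged balls, never for a two-sided $d$-Hausdorff bound between a $d_n$-ball and a $d$-ball; your constants are also slightly worse ($5\varepsilon$ versus $\varepsilon$), which is harmless here. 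One small point to tidy up: when $t<2\varepsilon$ your point $c$ at arclength $t-2\varepsilon$ does not exist, but then $\dist(a,\overline{B}_t^n(x))\le d(a,x)\le t<2\varepsilon$ trivially, so the bound survives.
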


\begin{proof}
Since $(X,d)$ is a length space we have
\begin{equation*}
d_H(\overline{B}_{t}(x),\overline{B}_{s}(y)) = \inf\{r : \overline{B}_{t}(x)\subseteq \overline{B}_{s+r}(y), \overline{B}_{s}(y)\subseteq \overline{B}_{t+r}(x) \}.
\end{equation*}
Take $\varepsilon>0$ and $\delta=\varepsilon /2$. We will proof that there exists $N\in\mathbb{N}$ such that
\begin{equation*}
|d_H(\overline{B}_{t}(x),\overline{B}_{s}(y)) - d_{H}^{n}(\overline{B}_{t}^n(x),\overline{B}_{s}^n(y))| \leq \varepsilon,
\end{equation*}
for every $n\geq N$. Using Lemma \ref{ballconvergence} there exists $N\in \mathbb{N}$ such that for all $n\ge N$
\begin{equation}\label{eq:1}
\overline{B}_{t}(x) \subseteq \overline{B}_{t+\delta}^{n}(x) \ \mbox{and} \
\overline{B}_{t}^{n}(x) \subseteq \overline{B}_{t+\delta}(x),
\end{equation}
\begin{equation}\label{eq:2}
\overline{B}_{s}(y) \subseteq \overline{B}_{s+\delta}^{n}(y) \ \mbox{and} \
\overline{B}_{s}^{n}(y) \subseteq \overline{B}_{s+\delta}(y).
\end{equation}
Fix $n$ such that $n\geq N$ and suppose $r$ satisfies $\overline{B}_{t}^{n}(x) \subseteq \overline{B}_{s+r}^{n}(y)$ and $\overline{B}_{s}^{n}(y) \subseteq \overline{B}_{t+r}^{n}(x)$. Using eq. \ref{eq:1} we get  $\overline{B}_{t}(x) \subseteq \overline{B}_{t+\delta}^{n}(x) \subseteq \overline{B}_{s+r+\delta}^{n}(y)$, therefore $\overline{B}_{t}(x)\subseteq \overline{B}_{s+r+\delta}^{n}(y)$. Now, using eq. \ref{eq:2} we have
 $ \overline{B}_{s+r+\delta}^{n}(y) \subseteq \overline{B}_{s+r+\delta+\delta}(y) = \overline{B}_{s+r+\varepsilon}(y)$,  thus $\overline{B}_{t}(x) \subseteq \overline{B}_{s+r+\varepsilon}(y)$. A similar computation leads to $    \overline{B}_{s}(y) \subseteq \overline{B}_{t+r+\varepsilon}(x)$.
    These last contentions imply
    \begin{equation*}
    d_H(\overline{B}_{t}(x),\overline{B}_{s}(y)) \leq r + \varepsilon,
    \end{equation*}
    so taking the infimum over all $r$ satisfying our assumptions we get
    \begin{equation*}
    d_H(\overline{B}_{t}(x),\overline{B}_{s}(y)) - d_{H}^{n}(\overline{B}_{t}^n(x),\overline{B}_{s}^n(y)) \leq \varepsilon .
    \end{equation*}
Analogously, 
    \begin{equation*}
    d_{H}^{n}(\overline{B}_{t}^n(x),\overline{B}_{s}^n(y)) - d_H(\overline{B}_{t}(x),\overline{B}_{s}(y)) \leq \varepsilon,
    \end{equation*}
    which is equivalent to $-\varepsilon \leq d_H(\overline{B}_{t}(x),\overline{B}_{s}(y)) - d_{H}^{n}(\overline{B}_{t}^n(x),\overline{B}_{s}^n(y))$. Thus
\begin{equation*}
|d_H(\overline{B}_{t}(x),\overline{B}_{s}(y)) - d_{H}^{n}(\overline{B}_{t}^n(x),\overline{B}_{s}^n(y))| \leq \varepsilon,
\end{equation*}
as we desired.
\end{proof}

As a corollary from Proposition \ref{ballconvergencelimit} we can now establish the stability of  strong geodesic completeness under uniform limits.

\begin{corollary}\label{corollary:uniform}
Let $(X_n,d_n)$ a sequence of locally compact and strongly geodesically complete length spaces that converges uniformly to a locally compact length space $(X,d)$. Then $(X,d)$ is strongly geodesically complete.
\end{corollary}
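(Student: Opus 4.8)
The plan is to leverage Theorem \ref{main} to translate the geometric condition of strong geodesic completeness into the purely metric statement that the map $f$ is an isometry, and then to establish the latter property for the limit space by passing to the limit in the isometry relations that hold for each $(X_n,d_n)$. By Theorem \ref{main}, strong geodesic completeness of $(X_n,d_n)$ is equivalent to the assertion that $f_n\colon(X_n\times\mathbb{R}_{\geq 0},d_T^n)\to(\Sigma(X_n),d_H^n)$, $f_n(x,t)=\overline{B}^n_t(x)$, is an isometry; that is, for all $x,y\in X$ and $t,s\geq 0$,
\begin{equation*}
d_H^n(\overline{B}^n_t(x),\overline{B}^n_s(y)) = d_n(x,y)+|t-s|.
\end{equation*}
Our goal is to prove the analogous identity $d_H(\overline{B}_t(x),\overline{B}_s(y)) = d(x,y)+|t-s|$ for the limit space, since this (again by Theorem \ref{main}) is exactly what it means for $(X,d)$ to be strongly geodesically complete. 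Note that $(X,d)$ is automatically complete, being a uniform limit of complete spaces, so the hypotheses of Theorem \ref{main} are met.

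The key step is the passage to the limit, and here Proposition \ref{ballconvergencelimit} does the heavy lifting on the left-hand side: it gives
\begin{equation*}
d_H(\overline{B}_t(x),\overline{B}_s(y)) = \lim_{n\to\infty} d_H^n(\overline{B}^n_t(x),\overline{B}^n_s(y)).
\end{equation*}
On the right-hand side, uniform convergence of the metrics gives immediately that $d_n(x,y)\to d(x,y)$, so that $d_n(x,y)+|t-s|\to d(x,y)+|t-s|$. Chaining these together, first I would write, for each $n$,
\begin{equation*}
d_H^n(\overline{B}^n_t(x),\overline{B}^n_s(y)) = d_n(x,y)+|t-s|
\end{equation*}
using the isometry $f_n$ supplied by Theorem \ref{main} applied to the strongly geodesically complete space $(X_n,d_n)$. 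Taking $n\to\infty$ on both sides and invoking Proposition \ref{ballconvergencelimit} on the left and uniform convergence on the right, the two limits must coincide, yielding $d_H(\overline{B}_t(x),\overline{B}_s(y)) = d(x,y)+|t-s|$.

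Having established that $f\colon(X\times\mathbb{R}_{\geq 0},d_T)\to(\Sigma(X),d_H)$ is distance preserving, hence an isometry onto its image, I would conclude by one final application of Theorem \ref{main} in the reverse direction: since $(X,d)$ is locally compact and complete and $f$ is an isometry, $(X,d)$ is strongly geodesically complete. The main obstacle I anticipate is not in the limiting argument itself, which is essentially bookkeeping, but in confirming that the identification $X_n=X$, $\bar d_n=d_n$ adopted after the definition of uniform convergence is applied consistently, so that the balls $\overline{B}^n_t(x)$ and $\overline{B}_t(x)$ genuinely share the same center $x\in X$ and the computations in Proposition \ref{ballconvergencelimit} and Theorem \ref{main} refer to the same underlying point set. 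One should also verify that local compactness of the limit $(X,d)$ is available as a hypothesis (it is assumed in the statement), so that Theorem \ref{main} and the compactness of closed balls may legitimately be invoked for $(X,d)$.
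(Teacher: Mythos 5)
Your proposal is correct and follows essentially the same route as the paper: apply Theorem \ref{main} to each $(X_n,d_n)$ to get $d_H^n(\overline{B}^n_t(x),\overline{B}^n_s(y))=d_n(x,y)+|t-s|$, pass to the limit via Proposition \ref{ballconvergencelimit} and uniform convergence of the metrics, and conclude with Theorem \ref{main} in the reverse direction. The paper's proof is just a more condensed version of this same chain of equalities.
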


\begin{proof}
Just observe that
\begin{align*}
d_H(\overline{B}_{t}(x),\overline{B}_{s}(y)) &= \displaystyle\lim_{n\to\infty} d_H^{n}(\overline{B}_{t}^{n}(x),\overline{B}_{s}^{n}(y)) 
= \displaystyle\lim_{n\to\infty} \left( d_n(x,y) + |t-s| \right) \\
&= d(x,y) + |t-s| .
\end{align*}
Thus by  Theorem \ref{main} $(X,d)$ is geodesically complete.
\end{proof}

\subsection{The maximum metric $d_\infty$.}

Besides the standard product metric $d_{X\times Y}$, there are other choices of metrics in the cartesian product $X\times Y$ that are useful in certain geometric applications. For instance, the following metric has been studied in the context of hyperbolic complex manifolds \cite{Kobayashi}.

For any two metric spaces $(X,d_X)$ and $(Y,d_Y)$ it is possible to define a metric on $X\times Y$ by
\begin{equation*}
d_{\infty}((x,y),(a,b)) = \max\{d_X(x,a), d_Y(y,b)\}.
\end{equation*}
We call $d_\infty$ the \textit{maximum metric} in $X\times Y$. If $X$ and $Y$ are complete and locally compact metric spaces then $(X\times Y,d_{\infty})$ is also complete and locally compact. Moreover, if $X$ and $Y$ are length spaces, then $(X\times Y,d_{\infty})$ is a length space. In fact, for any two points $(x,y),(a,b)\in X\times Y$, the point $(z,c)$ ---where $z$ and $c$ are midpoints of $x,y$ and $a,b$ respectively--- is a midpoint for $(x,y)$ and $(a,b)$. Since the space is complete we conclude that $X\times Y$ is a length space.

As an application of our previous results we show that the cartesian product of two strongly geodesically complete length spaces also has this property relative to the maximum metric. In order to be able to apply Theorem \ref{main}, we need to find first  the Hausdorff distance between two closed balls in $(X\times Y,d_{\infty})$.

\begin{proposition}\label{HausdorffBallsInftyMetric}
For every $(x,y),(a,b)\in X\times Y$ and $t,s\in\mathbb{R}_{\geq 0}$ we have
\begin{equation*}
d_{H}^{\infty}\left( \overline{B}_{t}(x,y), \overline{B}_{s}(a,b)  \right) = \max\left\{ d_H^{X}(\overline{B}_{t}(x),\overline{B}_{s}(a)), d_H^{Y}(\overline{B}_{t}(y),\overline{B}_{s}(b))  \right\},
\end{equation*}
where $d_{H}^{\infty}$, $d_{H}^{X}$ and $d_{H}^{Y}$ are the Hausdorff distances in $(X\times Y,d_{\infty})$, $X$ and $Y$, respectively.
\end{proposition}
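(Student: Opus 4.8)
The plan is to exploit the fact that, in the maximum metric, balls are \emph{products} of balls. Indeed, for any $(x,y)\in X\times Y$ and $t\ge 0$ one has
\begin{equation*}
\overline{B}_{t}(x,y)=\{(p,q): \max\{d_X(p,x),d_Y(q,y)\}\le t\}=\overline{B}_{t}(x)\times \overline{B}_{t}(y),
\end{equation*}
since $\max\{d_X(p,x),d_Y(q,y)\}\le t$ is equivalent to the conjunction $d_X(p,x)\le t$ and $d_Y(q,y)\le t$. Thus the statement reduces to a general formula for the Hausdorff distance between two product sets $A_1\times B_1$ and $A_2\times B_2$ (with $A_i\subset X$ and $B_i\subset Y$) under the maximum metric, which I would then specialize to balls.

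First I would compute the distance from a single point $(p,q)$ to a product set $A\times B$. Because $a'\in A$ and $b'\in B$ may be chosen independently and $\max$ is nondecreasing in each argument, I expect
\begin{equation*}
\dist_{\infty}\big((p,q),A\times B\big)=\inf_{(a',b')\in A\times B}\max\{d_X(p,a'),d_Y(q,b')\}=\max\{\dist(p,A),\dist(q,B)\}.
\end{equation*}
The inequality $\ge$ holds since each coordinate distance is bounded below by the corresponding $\dist$; the inequality $\le$ follows by choosing, for $\varepsilon>0$, points $a',b'$ that are simultaneously $\varepsilon$-almost optimal in each factor.

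Next I would take the supremum over $(p,q)\in A_1\times B_1$. Invoking again the independence of the two coordinates and the monotonicity of $\max$, the sup passes inside:
\begin{equation*}
\sup_{(p,q)\in A_1\times B_1}\max\{\dist(p,A_2),\dist(q,B_2)\}=\max\Big\{\sup_{p\in A_1}\dist(p,A_2),\,\sup_{q\in B_1}\dist(q,B_2)\Big\}.
\end{equation*}
Writing the one-sided excess as $e_X(A_1,A_2)=\sup_{p\in A_1}\dist(p,A_2)$ (and similarly for $Y$), this identifies the first of the two terms defining $d_{H}^{\infty}(A_1\times B_1,A_2\times B_2)$ as $\max\{e_X(A_1,A_2),e_Y(B_1,B_2)\}$; the symmetric computation gives the second term as $\max\{e_X(A_2,A_1),e_Y(B_2,B_1)\}$. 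Taking the outer maximum and regrouping the four excesses, and using $d_{H}^{X}(A_1,A_2)=\max\{e_X(A_1,A_2),e_X(A_2,A_1)\}$ together with its $Y$-analogue, yields
\begin{equation*}
d_{H}^{\infty}(A_1\times B_1,A_2\times B_2)=\max\{d_{H}^{X}(A_1,A_2),\,d_{H}^{Y}(B_1,B_2)\}.
\end{equation*}
Substituting $A_1=\overline{B}_{t}(x)$, $B_1=\overline{B}_{t}(y)$, $A_2=\overline{B}_{s}(a)$ and $B_2=\overline{B}_{s}(b)$ gives the claimed identity.

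The only genuinely delicate points are the two interchanges of an $\inf$ (respectively $\sup$) with the $\max$ over a product domain; both rest on the same elementary observation that optimizing a monotone function of two independently varying nonnegative quantities can be carried out coordinatewise. I do not anticipate any real obstacle beyond this bookkeeping, and in particular no compactness or completeness hypotheses are required for this purely metric computation.
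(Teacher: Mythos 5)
Your proof is correct, but it takes a genuinely different route from the paper's. Both arguments hinge on the same key observation that balls in the maximum metric split as products, $\overline{B}_{t}(x,y)=\overline{B}_{t}(x)\times\overline{B}_{t}(y)$. From there the paper works with the inclusion characterization $d_{H}^{\infty}(\overline{B}_{t}(x,y),\overline{B}_{s}(a,b))=\inf\{r:\overline{B}_{t}(x,y)\subseteq\overline{B}_{s+r}(a,b),\ \overline{B}_{s}(a,b)\subseteq\overline{B}_{t+r}(x,y)\}$ --- legitimate for balls in a length space via the identity $\overline{U}_{r}(\overline{B}_{s}(y))=\overline{B}_{s+r}(y)$ underlying Proposition \ref{tubular} --- and checks that an $r$ admissible for the product is admissible for each factor, and conversely that $r=\max\{r_1,r_2\}$ is admissible for the product when $r_1,r_2$ are admissible for the factors. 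You instead work with the excess/supremum formulation and establish the more general identity $d_{H}^{\infty}(A_1\times B_1,A_2\times B_2)=\max\{d_{H}^{X}(A_1,A_2),d_{H}^{Y}(B_1,B_2)\}$ for arbitrary nonempty product sets; your two interchanges of $\inf$ (resp.\ $\sup$) with $\max$ are both justified exactly as you indicate, by coordinatewise optimization over a product domain (the $\sup$ interchange uses only that the factors are nonempty, which holds since balls contain their centers). What your route buys is generality: it needs no length-space or local-compactness hypotheses and applies to sets other than balls. What the paper's route buys is brevity in context, since the inclusion formula is already set up in the preliminaries and is reused verbatim in the proof of Proposition \ref{ballconvergencelimit}. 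Either argument suffices for the corollary that follows.
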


\begin{proof}
As in the proof of Proposition \ref{ballconvergencelimit} we will use that
\begin{equation*}
d_{H}^{\infty}\left( \overline{B}_{t}(x,y), \overline{B}_{s}(a,b)  \right) = \inf\left\{ r: \overline{B}_{t}(x,y)\subset \overline{B}_{s+r}(a,b),
\overline{B}_{s}(a,b)\subset \overline{B}_{t+r}(x,y)   \right\} .
\end{equation*}
Suppose $r$ satisfies $\overline{B}_{t}(x,y)\subseteq \overline{B}_{s+r}(a,b)$ and
$\overline{B}_{s}(a,b)\subseteq \overline{B}_{t+r}(x,y)$. Since 
\begin{equation*}
\overline{B}_{t}(x)\times \overline{B}_{t}(y) =  \overline{B}_{t}(x,y)\subseteq \overline{B}_{s+r}(a,b) = \overline{B}_{s+r}(a) \times \overline{B}_{s+r}(b),
\end{equation*}
then we have 
\begin{equation*}
\overline{B}_{t}(x) \subseteq \overline{B}_{s+r}(a) \ \mbox{and} \
\overline{B}_{t}(y)\subseteq \overline{B}_{s+r}(b).
\end{equation*}
A similar procedure shows that
\begin{equation*}
\overline{B}_{s}(a) \subseteq \overline{B}_{t+r}(x) \ \mbox{and} \ \overline{B}_{s}(b) \subset \overline{B}_{t+r}(y).
\end{equation*}
Thus the last two relations yield $d_{H}^{X}(\overline{B}_{t}(x),\overline{B}_{s}(a)) \leq r$ and  $d_{H}^{Y}(\overline{B}_{t}(y),\overline{B}_{s}(b)) \leq r$, which implies
\begin{equation*}
\max\left\{ d_{H}^{X}(\overline{B}_{t}(x),\overline{B}_{s}(a)),  d_{H}^{Y}(\overline{B}_{t}(y),\overline{B}_{s}(b)) \right\} \leq r.
\end{equation*}
Because of the way we choose $r$ we conclude
\begin{equation*}
\max\left\{ d_{H}^{X}(\overline{B}_{t}(x),\overline{B}_{s}(a)),  d_{H}^{Y}(\overline{B}_{t}(y),\overline{B}_{s}(b)) \right\} \leq d_{H}^{\infty}\left( \overline{B}_{t}(x,y), \overline{B}_{s}(x,y) \right).
\end{equation*}
Now suppose $r_1$ and $r_2$ satisfy $\overline{B}_{t}(x) \subseteq \overline{B}_{s+r_1}(a)$, $\overline{B}_{s}(a) \subseteq \overline{B}_{t+r_1}(x)$, $\overline{B}_{t}(y) \subseteq \overline{B}_{s+r_2}(b)$ and $\overline{B}_{s}(b) \subseteq \overline{B}_{t+r_2}(y)$. Take $r=\max\{r_1,r_2\}$, then
    \begin{align*}
    \overline{B}_{t}(x,y) &= \overline{B}_{t}(x)\times \overline{B}_{t}(y) 
    \subseteq  \overline{B}_{s+r_1}(a) \times \overline{B}_{s+r_2}(b) \\
    &\subseteq  \overline{B}_{s+r}(a) \times \overline{B}_{s+r}(b) 
    = \overline{B}_{s+r}(a,b).
    \end{align*}
Therefore $\overline{B}_{t}(x,y) \subseteq \overline{B}_{s+r}(a,b)$. A similar computation proves that $\overline{B}_{s}(a,b) \subseteq \overline{B}_{t+r}(x,y)$. Thus
    \begin{equation*}
    d_{H}^{\infty}\left( \overline{B}_{t}(x,y), \overline{B}_{s}(a,b) \right) \leq r.
    \end{equation*}
    Because of the way we take $r$ we conclude
    \begin{equation*}
    d_{H}^{\infty}\left( \overline{B}_{t}(x,y), \overline{B}_{s}(a,b) \right) \leq \max\left\{ d_H^{X}(\overline{B}_{t}(x),\overline{B}_{s}(a)), d_H^{Y}(\overline{B}_{t}(y),\overline{B}_{s}(b))  \right\} .
    \end{equation*}
\end{proof}

\begin{corollary}
Let $(X,d_X)$ and $(Y,d_Y)$ be locally compact and strongly geodesically complete length spaces, then $(X\times Y, d_{\infty})$ is strongly geodesically complete. \end{corollary}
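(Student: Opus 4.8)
The plan is to invoke the characterization furnished by Theorem \ref{main}. Since $(X\times Y,d_\infty)$ is complete, locally compact, and a length space --- all recorded in the discussion preceding Proposition \ref{HausdorffBallsInftyMetric} --- Theorem \ref{main} applies to it, and so the product is strongly geodesically complete precisely when the map $f(w,r)=\overline{B}_r(w)$ from $\big((X\times Y)\times\mathbb{R}_{\geq 0},d_T\big)$ to $(\Sigma(X\times Y),d_H^{\infty})$ is an isometry. Thus it suffices to verify, for all $(x,y),(a,b)\in X\times Y$ and all $t,s\geq 0$, the single identity
\begin{equation*}
d_H^{\infty}\big(\overline{B}_t(x,y),\overline{B}_s(a,b)\big)=d_\infty\big((x,y),(a,b)\big)+|t-s|.
\end{equation*}

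First I would apply Theorem \ref{main} to each factor. Because $(X,d_X)$ and $(Y,d_Y)$ are strongly geodesically complete, the corresponding maps $f_X$ and $f_Y$ are isometries, which yields
\begin{equation*}
d_H^{X}\big(\overline{B}_t(x),\overline{B}_s(a)\big)=d_X(x,a)+|t-s|,\qquad d_H^{Y}\big(\overline{B}_t(y),\overline{B}_s(b)\big)=d_Y(y,b)+|t-s|.
\end{equation*}
Next, Proposition \ref{HausdorffBallsInftyMetric} expresses the Hausdorff distance in the product as the maximum of these two quantities, so
\begin{equation*}
d_H^{\infty}\big(\overline{B}_t(x,y),\overline{B}_s(a,b)\big)=\max\big\{\,d_X(x,a)+|t-s|,\ d_Y(y,b)+|t-s|\,\big\}.
\end{equation*}
Since $|t-s|$ occurs as a common summand on both sides of the maximum, it factors out, giving
\begin{equation*}
\max\big\{d_X(x,a),d_Y(y,b)\big\}+|t-s|=d_\infty\big((x,y),(a,b)\big)+|t-s|,
\end{equation*}
which is exactly the desired identity. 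Invoking Theorem \ref{main} once more, now in the converse direction, we conclude that $(X\times Y,d_\infty)$ is strongly geodesically complete.

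I do not expect a serious obstacle here; the proof is a direct assembly of the two tools already established. The one point that deserves attention is that the common term $|t-s|$ appears identically on both factors and can therefore be pulled out of the maximum. This is precisely the feature that makes the \emph{maximum} metric well adapted to the argument: it is the max metric (rather than the $\ell^1$-type product metric) that forces $\overline{B}_r(x,y)=\overline{B}_r(x)\times\overline{B}_r(y)$ with a \emph{single shared} radius $r$, so that both factor Hausdorff distances in Proposition \ref{HausdorffBallsInftyMetric} carry the very same radius parameters $t$ and $s$. Had the radii on the two factors differed, the clean factorization of the common term would fail, and the resulting map would not be an isometry for $d_T$.
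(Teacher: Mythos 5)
Your proof is correct and follows essentially the same route as the paper: apply Theorem \ref{main} to each factor, feed the resulting formulas into Proposition \ref{HausdorffBallsInftyMetric}, pull the common $|t-s|$ out of the maximum, and invoke Theorem \ref{main} again for the product. The only difference is that you spell out the preconditions (that $(X\times Y,d_\infty)$ is complete, locally compact, and a length space) which the paper leaves implicit from the preceding discussion.
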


\begin{proof}
Just note that
\begin{align*}
d_{H}^{\infty}\left( \overline{B}_{t}(x,y), \overline{B}_{s}(a,b)  \right) &= \max\left\{ d_H^{X}(\overline{B}_{t}(x),\overline{B}_{s}(a)), d_H^{Y}(\overline{B}_{t}(y),\overline{B}_{s}(b))  \right\} \\
&= \max\{ d_X(x,a) + |t-s|, d_Y(y,b) + |t-s| \} \\
&= \max\{d_X(x,a),d_Y(y,b)\} + |t-s| \\
&= d_{\infty}((x,y),(a,b)) + |t-s|.
\end{align*}
\end{proof}

\subsection{Isometric actions}

Isometric actions play a key role in various areas of geometry. It is often the case that certain properties of such actions, understood as symmetries of a geometric object, give enough information in order to describe ---or even characterize--- the object they are acting on.  For instance, in the context of length spaces with bounded curvature we have the following result: if an Alexandrov space have an isometry group of maximal size, then it is isometric to a Riemannian manifold \cite{GalazGuijarro}. Another example appears in \cite{Berestovskii}, where Berestovskii proves that a finite dimensional homogeneous metric space with curvature bounded by below is a smooth manifold.

In this section we show how isometric actions on a strongly geodesically complete length space can be lifted to isometric actions in its space of compact balls and further describe such quotients.

Let us denote by $G\curvearrowright X$ an isometric action of $G<\Iso(X,d)$ on $X$ and  by $(\mathcal{H}(X), d_H)$ the space of all compact subsets of $X$ endowed with the Hausdorff distance. For every $g\in G$ we define $\mathcal{H}[g]: \mathcal{H}(X)\to \mathcal{H}(X)$ as $\mathcal{H}[g](K)=g(K)$. It is not hard to prove that $\mathcal{H}[g]$ is an isometry of $(\mathcal{H}(X),d_H)$ and $\mathcal{H}[f\circ g^{-1}] = \mathcal{H}[f]\circ \mathcal{H}[g]^{-1}$ for every $f,g\in G$. This implies that
\begin{equation*}
\mathcal{H}[G] = \{ \mathcal{H}[g]: g\in G \}
\end{equation*}
is a subgroup of $\Iso(\mathcal{H}(X),d_H)$. In particular $\mathcal{H}[G]$ acts by isometries on $\Sigma(X)$.

If the isometric action  $G\curvearrowright X$ is proper ---that is, for each $x\in X$ there exists $r>0$ such that the set $\{g\in G: g(B_r(x))\cap B_{r}(x)\}$ is finite--- 
then ${X}/{G}$ is a metric space \cite{Bridson}. As the next result shows, strong geodesic completeness enables us to transfer properness from $(X,d)$ to $(\Sigma (X),d_H)$.

\begin{proposition}\label{propershooting}
Let $(X,d)$ be a locally compact and strongly geodesically complete length space and $G\curvearrowright X$ a proper isometric action. Then the isometric action from $\mathcal{H}[G]$ on $\Sigma(X)$ is proper.
\end{proposition}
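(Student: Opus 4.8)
The plan is to transport the entire question to the model space $(X\times\mathbb{R}_{\geq 0},d_T)$ via Theorem \ref{main}, where the lifted action becomes transparent. Since $(X,d)$ is strongly geodesically complete, the map $f(x,t)=\overline{B}_t(x)$ is an isometry from $(X\times\mathbb{R}_{\geq 0},d_T)$ onto $(\Sigma(X),d_H)$, so it suffices to understand the conjugated map $f^{-1}\circ\mathcal{H}[g]\circ f$ on $X\times\mathbb{R}_{\geq 0}$. Because each $g\in G$ is a bijective isometry of $(X,d)$, it carries metric balls to metric balls of the same radius, i.e. $g(\overline{B}_t(x))=\overline{B}_t(g(x))$. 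Hence $\mathcal{H}[g](f(x,t))=g(\overline{B}_t(x))=\overline{B}_t(g(x))=f(g(x),t)$, so the conjugated map is simply $\hat{g}(x,t)=(g(x),t)$: the original action on the first factor, trivial on the second. Thus the action of $\mathcal{H}[G]$ on $\Sigma(X)$ is proper if and only if $\hat{G}=\{\hat{g}:g\in G\}$ acts properly on $(X\times\mathbb{R}_{\geq 0},d_T)$.

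Next I would fix a point $(x,t)$ and produce the required radius directly from properness on $X$. By hypothesis there is $r>0$ such that $F=\{g\in G: g(B_r(x))\cap B_r(x)\neq\emptyset\}$ is finite, and I claim $\rho=r$ works at $(x,t)$. The key observation is that projection to the first factor is $1$-Lipschitz for $d_T$: if $(y,s)\in B_\rho(x,t)$ then $d(x,y)\leq d_T((x,t),(y,s))<\rho$, so $y\in B_\rho(x)$. Now suppose $\hat{g}(B_\rho(x,t))\cap B_\rho(x,t)\neq\emptyset$ and pick $(y,s)\in B_\rho(x,t)$ with $\hat{g}(y,s)=(g(y),s)\in B_\rho(x,t)$. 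Projecting both $(y,s)$ and $(g(y),s)$ shows $y\in B_r(x)$ and $g(y)\in B_r(x)$, whence $g(y)\in g(B_r(x))\cap B_r(x)$ and therefore $g\in F$.

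Translating back through the isometry $f$, the set of elements of $\mathcal{H}[G]$ that move $B_\rho(\overline{B}_t(x))$ so as to meet itself is contained in $\{\mathcal{H}[g]:g\in F\}$, the image of the finite set $F$ under $g\mapsto\mathcal{H}[g]$, hence finite. Since every element of $\Sigma(X)$ is of the form $\overline{B}_t(x)$ and $(x,t)$ was arbitrary, this establishes properness of $\mathcal{H}[G]\curvearrowright\Sigma(X)$.

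I do not expect a serious obstacle. The whole argument rests on two structural facts already in hand: the isometry $f$ of Theorem \ref{main} and the elementary identity $g(\overline{B}_t(x))=\overline{B}_t(g(x))$, which together reduce properness in $\Sigma(X)$ to properness in $X$. The only point demanding mild care is the bookkeeping between counting $g\in G$ and counting $\mathcal{H}[g]\in\mathcal{H}[G]$; this is harmless, since the relevant family of $\mathcal{H}[g]$'s is the image of the finite set $F$, so injectivity of $g\mapsto\mathcal{H}[g]$ is never needed.
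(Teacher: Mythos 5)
Your proof is correct and follows essentially the same route as the paper: both arguments rest on the identity $g(\overline{B}_t(x))=\overline{B}_t(g(x))$ together with the formula $d_H(\overline{B}_s(y),\overline{B}_r(x))=d(x,y)+|s-r|$ from Theorem \ref{main}, and both take the radius furnished by properness of $G\curvearrowright X$ at $x$ and show the relevant set of isometries of $\Sigma(X)$ is contained in the image of the finite set $\{g: g(B_r(x))\cap B_r(x)\neq\emptyset\}$. Your conjugation to the taxicab model $(X\times\mathbb{R}_{\geq 0},d_T)$ is only a cosmetic repackaging of the paper's direct Hausdorff-distance computation.
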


\begin{proof}
Take $\overline{B}_r(x)\in \Sigma(X)$ and let us denote the closed ball with center $K$ and radius $R$ in $(\Sigma(X),d_H)$ by $\overline{B}^{H}_{R}(K)$. Because the action $G\curvearrowright X$ is proper, there exists $R>0$ such that
\begin{equation*}
H=\{g\in G: g(B_{R}(x))\cap B_{R}(x)\neq \varnothing\}
\end{equation*}
is a finite set, and suppose that $\mathcal{H}[g]\in \mathcal{H}[G]$ satisfies
\begin{equation*}
\mathcal{H}[g](B^{H}_{R}(\overline{B}_{r}(x))) \cap B^{H}_{R}(\overline{B}_{r}(x)) \neq \varnothing.
\end{equation*}
On the other hand
\begin{align*}
\mathcal{H}[g](B^{H}_{R}(\overline{B}_{r}(x))) \cap B^{H}_{R}(\overline{B}_{r}(x)) &=  g(B^{H}_{R}(\overline{B}_{r}(x))) \cap B^{H}_{R}(\overline{B}_{r}(x)) \\
&= B^{H}_{R}(g(\overline{B}_{r}(x))) \cap B^{H}_{R}\overline{B}_{r}(x)) \\
&= B^{H}_{R}(\overline{B}_{r}(g(x))) \cap B^{H}_{R}(\overline{B}_{r}(x)),
\end{align*}
then there exists $\overline{B}_{s}(y) \in B^{H}_{R}(\overline{B}_{r}(g(x))) \cap B^{H}_{R}(\overline{B}_{r}(x))$ and therefore this ball satisfies
\begin{equation*}
d_H(\overline{B}_{s}(y), \overline{B}_{r}(g(x))) < R \mbox{\ and \ } d_H(\overline{B}_{s}(y), \overline{B}_{r}(x)) < R.
\end{equation*}
Further, as $(X,d)$ is strongly geodesically complete we have the equalities
\begin{align*}
d_H(\overline{B}_{s}(y), \overline{B}_{r}(g(x))) &= d(y,g(x)) + |s-r|\\
d_H(\overline{B}_{s}(y), \overline{B}_{r}(x)) &= d(y,x) + |s-r|,
\end{align*}
then $d(y,g(x))<R$ and $d(y,x)<R$. As a consequence of these two last inequalities we have $y\in g(B_{R}(x))\cap B_{R}(x)$, which implies $g\in H$. Hence
\begin{equation*}
\{ \mathcal{H}[g]\in\mathcal{H}[G] :  \mathcal{H}[g](B^{H}_{R}(\overline{B}_{r}(x))) \cap B^{H}_{R}(\overline{B}_{r}(x)) \neq \varnothing \} \subset \{ \mathcal{H}[g]: g\in H\},
\end{equation*}
and thus the action $\mathcal{H}[G]\curvearrowright \Sigma(X)$ is proper.
\end{proof}

Using Proposition \ref{propershooting} we conclude ${\Sigma(X)}/{\mathcal{H}[G]}$ is a metric space where the metric is given by
\begin{equation*}
d_{\mathcal{H}[G]}(\mathcal{H}[G](A), \mathcal{H}[G](B)) = \displaystyle\inf_{A,B\in \Sigma(X)} \{d_H(A,B)\},
\end{equation*}
for $A,B\in \Sigma(X)$. Moreover, if $X$ is a length space, then ${X}/{G}$ and ${\Sigma(X)}/{\mathcal{H}[G]}$ are both length spaces.

\begin{lemma}
Let $(X,d)$ be a locally compact and strongly geodesically complete length space and $G\curvearrowright X$. Then, for every $\overline{B}_{s}(y) \in \mathcal{H}[G](\overline{B}_{t}(x))$ we have $y\in G(x)$ and $s=t$.
\end{lemma}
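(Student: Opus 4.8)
The plan is to reduce the statement to the injectivity of the map $f$ established in Theorem \ref{main}. First I would record the elementary observation that any isometry carries a closed ball to a closed ball of the same radius. Concretely, for $g\in G$ and any $a\in X$ one has $d(g(a),g(x))=d(a,x)$, so $a\in\overline{B}_{t}(x)$ if and only if $g(a)\in\overline{B}_{t}(g(x))$; hence $\mathcal{H}[g](\overline{B}_{t}(x))=g(\overline{B}_{t}(x))=\overline{B}_{t}(g(x))$. Consequently the orbit $\mathcal{H}[G](\overline{B}_{t}(x))$ is exactly the set $\{\overline{B}_{t}(g(x)):g\in G\}$, and every one of its elements is a closed ball of radius $t$.

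Next, suppose $\overline{B}_{s}(y)\in\mathcal{H}[G](\overline{B}_{t}(x))$. By the previous paragraph there is some $g\in G$ with $\overline{B}_{s}(y)=\overline{B}_{t}(g(x))$ as elements of $\Sigma(X)$, that is, as subsets of $X$. Now I invoke Theorem \ref{main}: since $(X,d)$ is strongly geodesically complete, the map $f(x,t)=\overline{B}_{t}(x)$ is an isometry of $(X\times\mathbb{R}_{\geq 0},d_{T})$ onto $(\Sigma(X),d_{H})$, and in particular it is injective. Applying injectivity to the equality $f(y,s)=\overline{B}_{s}(y)=\overline{B}_{t}(g(x))=f(g(x),t)$ yields $(y,s)=(g(x),t)$, whence $s=t$ and $y=g(x)\in G(x)$, as claimed.

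The only real subtlety --- and the single point where strong geodesic completeness enters --- is this last step: a priori two closed balls with different centers or radii can coincide as sets (as in the example preceding Theorem \ref{main}, where $\overline{B}_{2}(P)=\overline{B}_{2}(Q)$ despite $P\neq Q$), so one cannot read off $y$ and $s$ from the set $\overline{B}_{s}(y)$ without the injectivity guaranteed by Theorem \ref{main}. Once injectivity is available the argument is immediate, requiring no compactness or limiting considerations.
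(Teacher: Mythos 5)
Your proposal is correct and follows the same route as the paper: identify the orbit element as $g(\overline{B}_{t}(x))=\overline{B}_{t}(g(x))$ and then deduce $s=t$, $y=g(x)$ from strong geodesic completeness via the injectivity of $f$ in Theorem \ref{main}. You merely spell out the two steps (isometries preserve balls and radii; injectivity of $f$) that the paper leaves implicit.
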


\begin{proof}
Because $\overline{B}_{s}(y)$ belongs to the orbit of $\overline{B}_{t}(x)$ there exists $g\in G$ such that
\begin{equation*}
\overline{B}_{s}(y) = \mathcal{H}[g](\overline{B}_{t}(x)) = g(\overline{B}_{t}(x)) = \overline{B}_{t}(g(x)).
\end{equation*}
Since $(X,d)$ is strongly geodesically complete we get $s=t$ and $y=g(x)$ as we desired.
\end{proof}

\begin{theorem}\label{theorem:quotient}
Let $(X,d)$ be a locally compact and strongly geodesically complete length space satisfying and $G\curvearrowright X$ a proper action. Then ${\Sigma(X)}/{\mathcal{H}[G]}$ is isometric to $({X}/{G})\times \mathbb{R}_{\geq 0}$ endowed with the taxicab metric $d_T$.
\end{theorem}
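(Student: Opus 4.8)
The plan is to exhibit the isometry directly, as the map on quotients induced by the isometry $f$ of Theorem \ref{main}. Define $F:(X/G)\times \mathbb{R}_{\geq 0}\to \Sigma(X)/\mathcal{H}[G]$ by $F(G(x),t)=\mathcal{H}[G](\overline{B}_{t}(x))$. First I would verify that $F$ is well defined: if $G(x)=G(x')$ then $x'=g(x)$ for some $g\in G$, so $\overline{B}_{t}(x')=\overline{B}_{t}(g(x))=\mathcal{H}[g](\overline{B}_{t}(x))$ lies in the same $\mathcal{H}[G]$-orbit, and the value of $F$ is independent of the chosen representative. Surjectivity is immediate, since every element of $\Sigma(X)$ is a closed ball $\overline{B}_{t}(x)$ and hence every orbit is $F(G(x),t)$ for some $x,t$. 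For injectivity I would invoke the preceding Lemma: if $F(G(x),t)=F(G(y),s)$ then $\overline{B}_{s}(y)$ belongs to the orbit of $\overline{B}_{t}(x)$, whence $y\in G(x)$ and $s=t$, giving $(G(x),t)=(G(y),s)$.

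With $F$ a bijection, the task reduces to matching the two quotient metrics. Properness of $G\curvearrowright X$ and of $\mathcal{H}[G]\curvearrowright \Sigma(X)$ (Proposition \ref{propershooting}) guarantee that both quotients carry genuine metrics, given by infima of distances taken over orbit representatives. The core computation is to unwind the orbit-space distance on $\Sigma(X)/\mathcal{H}[G]$. By the preceding Lemma the orbit of $\overline{B}_{t}(x)$ is exactly $\{\overline{B}_{t}(g(x)):g\in G\}$, a family of balls \emph{all of the same radius} $t$, so that
\begin{align*}
d_{\mathcal{H}[G]}\bigl(\mathcal{H}[G](\overline{B}_{t}(x)),\mathcal{H}[G](\overline{B}_{s}(y))\bigr)=\inf_{g,h\in G} d_H\bigl(\overline{B}_{t}(g(x)),\overline{B}_{s}(h(y))\bigr).
\end{align*}
Applying Theorem \ref{main} to each pair --- which is legitimate because $(X,d)$ is strongly geodesically complete --- replaces every $d_H(\overline{B}_{t}(g(x)),\overline{B}_{s}(h(y)))$ by $d(g(x),h(y))+|t-s|$.

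The decisive step, and the only point requiring care, is to pull the term $|t-s|$ out of the infimum. Since $|t-s|$ does not depend on $g$ or $h$, one obtains
\begin{equation*}
\inf_{g,h\in G}\bigl(d(g(x),h(y))+|t-s|\bigr)=\Bigl(\inf_{g,h\in G} d(g(x),h(y))\Bigr)+|t-s|,
\end{equation*}
and, since $G$ acts by isometries, the remaining double infimum is precisely the orbit-space distance $\bar{d}(G(x),G(y))$ on $X/G$. Hence the left-hand side equals $\bar{d}(G(x),G(y))+|t-s|=d_T\bigl((G(x),t),(G(y),s)\bigr)$, proving that $F$ is an isometry. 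I expect the main obstacle to be purely bookkeeping: one must be sure that the formula of Theorem \ref{main} applies uniformly to every pair of translated balls (so the substitution is valid \emph{before} taking the infimum) and that the fixed-radius assertion of the preceding Lemma is what lets the contribution $|t-s|$ factor cleanly out of the infimum.
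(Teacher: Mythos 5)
Your proposal is correct and follows essentially the same route as the paper: both compute the quotient distance as an infimum over orbit representatives, use the preceding lemma to see that each orbit consists of balls of a single fixed radius, substitute the formula $d_H(\overline{B}_t(g(x)),\overline{B}_s(h(y)))=d(g(x),h(y))+|t-s|$ from Theorem \ref{main}, and pull the constant $|t-s|$ out of the infimum. Your write-up is in fact more careful than the paper's three-line computation, since you also verify that the induced map on quotients is well defined and bijective.
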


\begin{proof}
Fix $x_0,y_0\in X$ and $t_0,s_0\in \mathbb{R}_{\geq 0}$, thus
\begin{align*}
d(\mathcal{H}(\overline{B}_{t_0}(x_0)) , \mathcal{H}(\overline{B}_{s_0}(y_0))) &= \displaystyle\inf\{d_H(\overline{B}_t(x),\overline{B}_s(y))\}  \\
&= \displaystyle\inf\{d(g_x(x_0),g_y(y_0)) + |t_0-s_0|\} \\
&= d_G(G(x),G(y)) + |t_0-s_0|.
\end{align*}
\end{proof}

\begin{remark}
Not every metric quotient is strongly geodesically complete even when the base space is. For instance consider $\mathbb{R}$ with its standard metric and the group generated by the isometry $x\mapsto x+2\pi$. The quotient space is isometric to the circle $\mathbb{S}^{1}$ which does not satisfies strong geodesic completeness since the map $f$ from Theorem \ref{main} is not injective.
\end{remark}

\subsection{The group $\Iso (\Sigma (X),d_H)$}

One of the main goals of a geometric theory consists in the description of the structure of its group of isometries. In the case of Riemannian geometry, Myers and Steenrod proved in \cite{MS} that the group of isometries of a Riemannian manifold is a Lie group, and the same statement remains true for length spaces which are locally compact, with finite Hausdorff dimension and curvature bounded by below \cite{Fukaya-Yamaguchi}. 

Our last results pertain the description of the group of isometries $\Iso (\Sigma (X),d_H)$ when $(X,d)$ is a metric Hadamard space.

We note that strong geodesic completeness is closely related to the existence of a correspondence between isometries of $(X,d)$ and isometries of $(\Sigma (X),d_H)$. Indeed, if $(X,d)$ is strongly geodesically complete then any isometry of $f : (X,d)\to (X,d)$ gives rise to a natural isometry  $F :(\Sigma (X),d_H)\to (\Sigma (X),d_H)$ as follows: for any $B\in \Sigma (X)$, define $F(B)=f(B)$. Since $f$ is an isometry, it sends closed balls to closed balls and $F(\overline{B}_r(x))=\overline{B}_r(f(x))$. Then
 \begin{align*}
 d_H(F(\overline{B}_r(x)),F(\overline{B}_s(y))) &= d_H(\overline{B}_r(f(x)),\overline{B}_s(f(y))) = |r-s|+d(f(x),f(y))\\ &= |r-s| +d(x,y) = d_H(\overline{B}_r(x),\overline{B}_s(y)) .
 \end{align*}
Hence $F$ is an isometry of $(\Sigma (X),d_H)$.

On the other hand, additional properties can be used to show that ---in some specific cases--- in fact all isometries of $(\Sigma (X),d_H))$ arise in this way. For instance, in \cite{gruber1} it is shown that any isometry of $(\Sigma (\mathbb{R}^n),d_H)$ that sends one point sets to one point sets is a rigid motion. (See also \cite{gruber2}). In our setting, the relevant geometric property is  uniqueness of midpoints.

\begin{remark} \label{rem:uniquemp}
Let us notice that even if $(X,d)$ is strongly geodesically complete, geodesics in $(\Sigma (X),d_H)$ might not have unique midpoints. In fact, $\overline{B}_{t}(z)$ is a midpoint  for the pair $(\overline{B}_r(p),\overline{B}_s(q))$, where $t=(r+s)/2$ and $z$ is any midpoint of $d(p,q)$. However, if $(X,d)$ in addition has unique midpoints, then  $(\overline{B}_r(p),\overline{B}_s(q))$ has a unique midpoint in $(\Sigma (X), d_H)$ only when $r=s$.
\end{remark}

\begin{theorem}
Let $(X,d)$ be a locally compact and strongly geodesically complete length space. Then, every isometry of $(X,d)$ gives rise to an isometry of $(\Sigma (X),d_H)$. Further, if $(X,d)$ has unique midpoints then every isometry of $(\Sigma (X),d_H)$ gives rise to an isometry of $(X,d)$. 
\end{theorem}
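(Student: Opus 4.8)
The first assertion is exactly the computation carried out in the paragraph preceding the statement, so the plan is to concentrate on the converse. Assuming $(X,d)$ has unique midpoints, I would first transport the problem from $(\Sigma(X),d_H)$ to the taxicab half-space $(X\times\mathbb{R}_{\geq 0},d_T)$: since $(X,d)$ is strongly geodesically complete, Theorem \ref{main} guarantees that $f(x,t)=\overline{B}_t(x)$ is an isometry, so every $\Phi\in\Iso(\Sigma(X),d_H)$ corresponds to an isometry $\tilde\Phi=f^{-1}\circ\Phi\circ f$ of $(X\times\mathbb{R}_{\geq 0},d_T)$. The goal becomes showing that $\tilde\Phi$ has the form $\tilde\Phi(x,t)=(\psi(x),t)$ for some $\psi\in\Iso(X,d)$; the desired isometry of $(X,d)$ is then $\psi$, recovered by restricting $\Phi$ to the singletons $X\times\{0\}$.

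The main device is the relation ``the pair $\{A,B\}$ admits a unique midpoint'', which is preserved by any isometry. Exploiting uniqueness of midpoints in $X$ together with Remark \ref{rem:uniquemp} --- and a short direct computation for the case of equal base points --- I would establish the full dichotomy: for $A=(p,r)$ and $B=(q,s)$ with $A\neq B$, the midpoint is unique precisely when $r=s$ or $p=q$, whereas if $p\neq q$ and $r\neq s$ one produces a whole one-parameter family of midpoints $(z,u)$ by sliding $z$ along a geodesic from $p$ to $q$ while adjusting the height $u$. From this dichotomy I would read off the maximal ``cliques'' of the relation: they are exactly the horizontal slices $X\times\{r\}$ and the vertical lines $\{p\}\times\mathbb{R}_{\geq 0}$. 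Indeed, a clique containing two points of different heights must consist of points sharing a single base point, while a clique of constant height is contained in a slice.

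Since $\tilde\Phi$ permutes maximal cliques, the next step is to separate the two families. Here strong geodesic completeness is essential: a horizontal slice, with the restricted metric, is isometric to $(X,d)$ and hence strongly geodesically complete, while a vertical line is isometric to $[0,\infty)$, which fails strong geodesic extendibility at its endpoint (as in Example \ref{ex:nobdry}); being an isometry invariant, strong geodesic completeness forces $\tilde\Phi$ to send slices to slices and lines to lines. Because $(p,r)$ is the unique intersection of the slice $X\times\{r\}$ with the line $\{p\}\times\mathbb{R}_{\geq 0}$, this yields the product form $\tilde\Phi(p,r)=(\psi(p),h(r))$. Substituting this into the isometry identity $d(\psi(p),\psi(q))+|h(r)-h(s)|=d(p,q)+|r-s|$ and specializing (first $r=s$, then $p=q$) shows $\psi\in\Iso(X,d)$ and that $h$ is a surjective isometry of $[0,\infty)$, whence $h=\mathrm{id}$; thus $\tilde\Phi(p,r)=(\psi(p),r)$ as required.

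The hard part is the clique-separation step, and more conceptually the realization that strong geodesic completeness is precisely what breaks the symmetry between the $X$-factor and the $\mathbb{R}_{\geq 0}$-factor. This hypothesis cannot be dropped: for $X=[0,\infty)$ (which has unique midpoints but is not strongly geodesically complete) the half-space is an $\ell^1$ quarter-plane admitting the coordinate swap $(x,t)\mapsto(t,x)$, an isometry that does not preserve singletons. A secondary point requiring care is the midpoint count in the mixed case $p\neq q$, $r\neq s$, where one must exhibit at least two genuinely distinct midpoints; the degenerate case of a one-point space $X$ is trivial and can be dispatched at the outset.
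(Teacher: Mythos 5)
Your proposal is correct, and while it rests on the same basic invariant as the paper's proof --- uniqueness of midpoints, via Remark \ref{rem:uniquemp} --- the global strategy is genuinely different. The paper stays in $(\Sigma(X),d_H)$ and argues one radius at a time: writing $F(\overline{B}_t(p))=\overline{B}_r(\hat p)$ and $F(\overline{B}_t(q))=\overline{B}_s(\hat q)$, it pulls a midpoint of the image pair back through $F^{-1}$, identifies it as $F(\overline{B}_t(a))$ with $a$ the unique midpoint of $p$ and $q$, and then invokes Remark \ref{rem:uniquemp} to conclude $r=s$; this gives that $F$ maps the set of balls of radius $t$ onto the set of balls of some radius $r$, and the induced isometry of $X$ is read off from the restriction of $F$ to the singletons. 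You instead transport to the taxicab model, prove the full dichotomy for uniqueness of midpoints, classify the maximal cliques of that relation as horizontal slices and vertical lines, and separate the two families by noting that a slice is isometric to $(X,d)$ while a line is isometric to $[0,\infty)$, which is not strongly geodesically complete. Your longer route buys two things. First, it closes a case the paper's argument passes over: Remark \ref{rem:uniquemp} forces $r=s$ only when $\hat p\neq\hat q$, and a priori $F$ could send an equal-radius pair to a concentric pair (whose midpoint is also unique); your slice-versus-line separation is precisely what rules this out, and it is also the step where strong geodesic completeness of $X$ is genuinely used --- your quarter-plane example for $X=[0,\infty)$ correctly shows it cannot be dropped there. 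Second, you obtain the normal form $F(\overline{B}_r(p))=\overline{B}_r(\psi(p))$ rather than merely an induced isometry of $X$, which is the statement actually needed to deduce the group isomorphism of Corollary \ref{coro:isom}.
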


\begin{proof}  Let $F\in\Iso (\Sigma (X),d_H)$ and for any $t\ge 0$ let us denote 
\begin{equation*}
\Sigma_t=
\{\overline{B}_t(x)\ \mid \ x\in X \}\cong \{ t\}\times X.
\end{equation*}
 We first proceed to show that $F$ maps balls of the same radius to balls of the same radius. In other words, given $t\ge 0$ there exists $r\ge 0$ such that $F(\Sigma_t )\subset \Sigma_r$. Thus consider $F(\overline{B}_t(p))=\overline{B}_r(\hat{p} )$, $F(\overline{B}_t(q))=\overline{B}_s(\hat{q} )$. Hence
\begin{equation*}
d(p,q)=d_H(\overline{B}_t(p),\overline{B}_t(q))=d_H(F(\overline{B}_t(p)),F(\overline{B}_t(q)))=|r-s|+d(\hat{p},\hat{q}).
\end{equation*}
Since $(\Sigma (X), d_H)$ is intrinsic, there exist midpoints between $F(\overline{B}_t(p))$ and $F(\overline{B}_t(q))$. Let $B\in \Sigma (X)$  such a midpoint. Thus
\begin{equation*}
d_H(F(\overline{B}_t(p)),B)=d_H(p,q)/2=d_H(F(\overline{B}_t(q)),B).
\end{equation*}
Further, since $F^{-1}\in\Iso \, (\Sigma (X),d_H)$ we have
\begin{equation*}
d_H(\overline{B}_t(p),F^{-1}(B))=d_H(p,q)/2=d_H(\overline{B}_t(q),F^{-1}(B))
\end{equation*}
Let $F^{-1}(B)=\overline{B}_u(a)$, thus
\begin{align*}
d_H(p,q) &=d(p,q)  \le d(p,a)+d(a,q)\\ &= [d_H(\overline{B}_t(p),F^{-1}(B))-|t-u|]+[d_H(\overline{B}_t(q), F^{-1}(B))-|t-u|]\\ &= d(p,q)-2|t-u| = d_H(p,q)-2|t-u|.
\end{align*}
It follows that $t=u$, so $B=F(\overline{B}_t(a))$. Thus 
\begin{align*}
d(p,a) &= d_H(\overline{B}_t(p),\overline{B}_t(a))=d_H(F(\overline{B}_t(p)),B)=d(p,q)/2,\\
d(q,a) &= d_H(\overline{B}_t(q),\overline{B}_t(a))=d_H(F(\overline{B}_t(q)),B)=d(p,q)/2,
\end{align*}
Since  $(X,d)$ has unique midpoints we then have that $a$ is the unique midpoint for $(p,q)$. As a consequence, the midpoint $B=F(\overline{B}_t(a))$ between $F(\overline{B}_t(p))$ and $F(\overline{B}_t(q))$ is also unique. By Remark \ref{rem:uniquemp} above we then have $r=s$ and hence $F(\Sigma_t)\subset \Sigma_r$. Further, given $B'\in \Sigma_r$ then $F^{-1}(B')\in \Sigma_t$. It follows that $F(\Sigma_t)=\Sigma_r$. Let $F(\{x\})=F(\overline{B}_0(x))=\overline{B}_R(\hat{x})$ and define $f:X\to X$ by $f(x)=\hat{x}$. We have just shown that $f$ is a distance preserving (hence injective) surjection. 
\end{proof}

Since in a Hadamard space any pair of points can be joined by a unique geodesic segment, in virtue of the above theorem we can establish in this case the equivalence between the isometries of $(\Sigma (X),d_H)$ and $\Iso (X,d)$. 

\begin{corollary}\label{coro:isom}
Let $(X,d)$ be a Hadamard space. Then $\Iso (\Sigma (X),d_H)\cong \Iso (X,d)$. In particular,
$\Iso(\Sigma (\mathbb{R}^n),d_H)\cong O(n)$ and $\Iso(\Sigma (\mathbb{H}^n),d_H)\cong O_0(n,1)$.
\end{corollary}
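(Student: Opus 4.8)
The plan is to promote the correspondence supplied by the preceding theorem into an honest group isomorphism. First I would verify that a Hadamard space $(X,d)$ satisfies all the standing hypotheses: it is complete and, being homeomorphic to a Euclidean space, locally compact; it is strongly geodesically complete by the Cartan--Hadamard remark recorded above; and it has unique midpoints, since any two of its points are joined by a unique geodesic segment. Consequently the preceding theorem applies and produces two maps, namely $\Phi\colon \Iso(X,d)\to\Iso(\Sigma(X),d_H)$ sending $f$ to $F_f$, where $F_f(B)=f(B)$, and $\Psi\colon\Iso(\Sigma(X),d_H)\to\Iso(X,d)$ sending $F$ to the isometry it induces on singletons. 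My goal is to show $\Phi$ is a bijective group homomorphism, so that its inverse is automatically a homomorphism as well.

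The routine part is that $\Phi$ is an injective homomorphism: from $F_{f\circ g}(B)=f(g(B))=F_f(F_g(B))$ one reads $\Phi(f\circ g)=\Phi(f)\circ\Phi(g)$, and since $F_f(\{x\})=\{f(x)\}$ the map $F_f$ determines $f$. It therefore remains only to prove that $\Phi$ is onto, i.e. that every $F\in\Iso(\Sigma(X),d_H)$ has the form $B\mapsto f(B)$ for some $f\in\Iso(X,d)$.

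The heart of the argument---and the step I expect to be the main obstacle---is to show that such an $F$ preserves radii exactly, that is, $F(\Sigma_t)=\Sigma_t$ for every $t$. The preceding theorem already gives $F(\Sigma_t)=\Sigma_{r(t)}$ for some function $r$, and applying the same statement to $F^{-1}$ shows that $r$ is a bijection of $\mathbb{R}_{\ge 0}$. I would then compute the infimal $d_H$-distance between two slices: by Theorem \ref{main} one has $d_H(\overline{B}_t(x),\overline{B}_{t'}(y))=d(x,y)+|t-t'|\ge|t-t'|$, with equality attained when the centers coincide, so the infimum of $d_H$ over $\Sigma_t\times\Sigma_{t'}$ equals $|t-t'|$. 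Since a surjective isometry preserves such infimal distances between subsets, and $F$ maps $\Sigma_t,\Sigma_{t'}$ onto $\Sigma_{r(t)},\Sigma_{r(t')}$, we obtain $|r(t)-r(t')|=|t-t'|$; as the identity is the only surjective self-isometry of the half-line $\mathbb{R}_{\ge 0}$, this forces $r=\mathrm{id}$. In particular $F$ sends singletons to singletons, so $f=\Psi(F)$ satisfies $F(\{x\})=\{f(x)\}$, and for each $t$ we may write $F(\overline{B}_t(x))=\overline{B}_t(g_t(x))$. Comparing $\overline{B}_t(x)$ with $\{x\}$ and using $d_H(\overline{B}_t(x),\{x\})=t$ together with $d_H(\overline{B}_t(g_t(x)),\{f(x)\})=d(g_t(x),f(x))+t$ yields $g_t(x)=f(x)$. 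Hence $F=F_f=\Phi(f)$, establishing surjectivity and therefore $\Iso(\Sigma(X),d_H)\cong\Iso(X,d)$.

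Finally, since $\mathbb{R}^n$ and $\mathbb{H}^n$ are Hadamard spaces the isomorphism applies verbatim, giving $\Iso(\Sigma(\mathbb{R}^n),d_H)\cong\Iso(\mathbb{R}^n)$ and $\Iso(\Sigma(\mathbb{H}^n),d_H)\cong\Iso(\mathbb{H}^n)$; the two displayed identifications then follow upon recalling the classical descriptions of the isometry groups of Euclidean and real hyperbolic $n$-space.
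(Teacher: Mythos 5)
Your proposal is correct, and it follows the route the paper clearly intends (everything is routed through the preceding theorem), but it is worth noting that you actually prove more than the paper writes down: the paper's justification of the corollary is a single sentence, and the preceding theorem only establishes that an $F\in\Iso(\Sigma(X),d_H)$ satisfies $F(\Sigma_t)=\Sigma_r$ for \emph{some} $r$ depending on $t$, which by itself does not yield that the two correspondences $\Phi$ and $\Psi$ are mutually inverse. Your argument that the radius function $r$ must be the identity --- computing the infimal $d_H$-distance $|t-t'|$ between the slices $\Sigma_t$ and $\Sigma_{t'}$ via Theorem \ref{main}, observing that $r$ is a surjective self-isometry of $\mathbb{R}_{\ge 0}$, and then pinning down the center map by comparing $\overline{B}_t(x)$ with the singleton $\{x\}$ --- is exactly the missing content needed to get surjectivity of $\Phi$ and hence a genuine group isomorphism, and it is carried out correctly (injectivity of $(x,t)\mapsto\overline{B}_t(x)$, needed to write $F(\overline{B}_t(x))=\overline{B}_t(g_t(x))$ unambiguously, is supplied by Theorem \ref{main}). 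One caveat on the final step: deferring to the ``classical descriptions'' gives $\Iso(\mathbb{R}^n)\cong\mathbb{R}^n\rtimes O(n)$, the full Euclidean group, not $O(n)$; so the displayed identification $\Iso(\Sigma(\mathbb{R}^n),d_H)\cong O(n)$ does not follow from your (correct) isomorphism --- translations of $\mathbb{R}^n$ induce nontrivial isometries of $\Sigma(\mathbb{R}^n)$. This appears to be a slip in the paper's statement rather than in your argument, but as written your last sentence silently endorses it.
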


\section*{Acknowledgements}
W. Barrera was partially supported by grants Conacyt-SNI 45382 and P/PFCE-2017-31MSU0098J-13. D. A. Solis was partially supported by grants Conacyt-SNI 38368, P/PFCE-2017-31MSU0098J-13 and UADY-CEA-SAB011-2017. D. A. Solis want to acknowledge the kind hospitality of CIMAT- M\'erida, where part of this work was developed during a sabbatical leave.

\bibliographystyle{amsplain}

\end{document}